\newcommand*\patchAmsMathEnvironmentForLineno[1]{%
  \expandafter\let\csname old#1\expandafter\endcsname\csname #1\endcsname
  \expandafter\let\csname oldend#1\expandafter\endcsname\csname end#1\endcsname
  \renewenvironment{#1}%
  {\linenomath\csname old#1\endcsname}%
  {\csname oldend#1\endcsname\endlinenomath}}%
\newcommand*\patchBothAmsMathEnvironmentsForLineno[1]{%
  \patchAmsMathEnvironmentForLineno{#1}%
  \patchAmsMathEnvironmentForLineno{#1*}}%
\newcommand{\dd}{\mathop{}\!\mathrm{d}}
\newcommand{\evaluatedin}{\raise-.5ex\hbox{\ensuremath{\vert}}}
\numberwithin{equation}{section}
\numberwithin{table}{section}
\numberwithin{figure}{section}
\crefname{section}{section}{sections}
\crefname{subsection}{section}{sections}
\Crefname{figure}{Figure}{Figures}
\crefname{chapter}{chapter}{chapters}
\crefname{appendix}{appendix}{appendices}
\crefname{subappendix}{section}{sections}
\Crefname{subappendix}{Section}{Sections}
\crefname{page}{page}{pages}
\newcommand{\refcheckize}[1]{%
  \expandafter\let\csname @@\string#1\endcsname#1%
  \expandafter\DeclareRobustCommand\csname relax\string#1\endcsname[1]{%
    \csname @@\string#1\endcsname{##1}\@for\@temp:=##1\do{\wrtusdrf{\@temp}\wrtusdrf{{\@temp}}}}%
  \expandafter\let\expandafter#1\csname relax\string#1\endcsname
}
\newcommand{\refcheckizetwo}[1]{%
  \expandafter\let\csname @@\string#1\endcsname#1%
  \expandafter\DeclareRobustCommand\csname relax\string#1\endcsname[2]{%
    \csname @@\string#1\endcsname{##1}{##2}\wrtusdrf{##1}\wrtusdrf{{##1}}\wrtusdrf{##2}\wrtusdrf{{##2}}}%
  \expandafter\let\expandafter#1\csname relax\string#1\endcsname
}
\DeclareMathOperator*{\esssup}{ess\,sup}
\DeclareMathOperator{\spanop}{span}
\DeclareMathOperator{\Lip}{Lip}
\renewcommand{\restriction}{\raise-.5ex\hbox{\ensuremath{\upharpoonright}}}
\DeclarePairedDelimiter{\abs}{\lvert}{\rvert}
\DeclarePairedDelimiter{\norm}{\lVert}{\rVert}
\newlist{H-hyp}{enumerate}{2}
\setlist[H-hyp,1]{label=(H\arabic*)}
\setlist[H-hyp,2]{label=(H\arabic{H-hypi}.\arabic*)}
\crefname{H-hyp}{hypothesis}{hypotheses}
\Crefname{H-hyp}{Hypothesis}{Hypotheses}
\let\cref@old@resetby\cref@resetby%
\def\cref@resetby#1#2{
  \let#2\relax%
  \ifnum\pdfstrcmp{#1}{H-hypii}=\z@
    \def#2{H-hypi}%
  \fi%
 \ifx#2\relax%
    \cref@old@resetby{#1}{#2}
 \fi}%
\theoremstyle{plain}
\newtheorem{theorem}{Theorem}[section]
\newtheorem{lemma}[theorem]{Lemma}
\newtheorem{proposition}[theorem]{Proposition}
\newtheorem{corollary}[theorem]{Corollary}
\title{Approximating evolution operators of linear delay equations: a general framework for the convergence analysis}
\author{
Alessia And\`o$^{1}$, Giusy Bosco$^{2}$, Dimitri Breda$^{3}$ and Davide Liessi $^{4}$\\[.5em]
\small 
CDLab -- Computational Dynamics Laboratory\\[-.2em]
\small Department of Mathematics, Computer Science and Physics -- University of Udine\\[-.2em]
\small via delle scienze 206, 33100 Udine, Italy\\[.5em]
\small $^{1}$\texttt{alessia.ando@uniud.it}\\[-.2em]
\small $^{2}$\texttt{giusy.bosco1998@gmail.com}\\[-.2em]
\small $^{3}$\texttt{dimitri.breda@uniud.it}\\[-.2em]
\small $^{4}$\texttt{davide.liessi@uniud.it}
}
\date{\today}
\begin{document}
\clearpage
\maketitle
\thispagestyle{empty}
\begin{abstract}
We consider the problem of discretizing evolution operators of linear delay equations with the aim of approximating their spectra, which is useful in investigating the stability properties of (nonlinear) equations via the principle of linearized stability.
We develop a general convergence analysis based on a reformulation of the operators by means of a fixed-point equation, providing a list of hypotheses related to the regularization properties of the equation and the convergence of the chosen approximation techniques on suitable subspaces.
This framework unifies the proofs for some methods based on pseudospectral discretization, which we present here in this new form.
To exemplify the generality of the framework, we also apply it to a method of weighted residuals found in the literature, which was previously lacking a formal convergence analysis.
\end{abstract}

\noindent
\textbf{Keywords:}
delay differential equations,
renewal equations,
evolution operators,
eigenvalue approximation,
numerical discretization,
convergence analysis.


\section{Introduction}

Delayed terms frequently arise in mathematical modeling, as they allow to base the current evolution on the past history, enhancing the realism of the model.
Examples of fields where delays occur naturally are control theory in engineering, e.g.
\cite{%
Fridman2014,%
GuKharitonovChen2003,%
MichielsNiculescu2014,%
Stepan1989%
},
and population dynamics or epidemics in mathematical biology, e.g.
\cite{%
ArinoVanDenDriessche2006,%
BredaDiekmannDeGraafPuglieseVermiglio2012,%
Kuang1993,%
MacDonald1978,%
MetzDiekmann1986,%
Smith2011%
};
further applications are discussed in
\cite{%
Erneux2009,%
KolmanovskiiMyshkis1999%
}.

Delays may have several effects on the dynamics of a system: they typically have a destabilizing effect, but they may also contribute to stabilization; moreover, they can facilitate the emergence of periodic orbits.
In applications, there is thus a strong interest in studying the asymptotic stability of equilibria, periodic orbits and other kinds of invariant sets.

A key approach consists in resorting to the principle of linearized stability, which links the problem of interest to the stability properties of the systems obtained via linearization around the relevant solutions (or, more precisely, to the stability of the null solution of such linearized systems).
These properties can be derived from the evolution operators, i.e. the operators that advance the state of the system by a certain time span, which act on the infinite-dimensional state space.
In the case of equilibria, in principle any evolution operator gives the required information, while for periodic orbits the monodromy operators, which advance the state by one period, are considered \cite{DiekmannGettoGyllenberg2008,DiekmannVanGilsVerduynLunelWalther1995,HaleVerduynLunel1993,BredaLiessi2021}.
Evolution operators can also be used to investigate the presence of chaotic dynamics via the Lyapunov exponents \cite{BredaVanVleck2014,BredaLiessi2025}.

In previous works, some of the authors and colleagues have introduced methods for approximating the evolution operators of linear delay differential equations or retarded functional differential equations (RFDEs) \cite{BredaMasetVermiglio2012,BredaMasetVermiglio2015}, of linear renewal equations (REs) \cite{BredaLiessi2018} and of coupled REs and RFDEs \cite{BredaLiessi2020}, reducing them to finite dimension with pseudospectral discretization techniques, also in a piecewise fashion \cite{BredaLiessiVermiglio2022}.

Several other methods to approximate the evolution operators and their spectra are available in the literature
\cite{%
ItoKappel1991,%
LuzyaninaEngelborghsLustRoose1997,%
InspergerStepan2011,%
BorgioliHajduInspergerStepanMichiels2020,%
Bueler2007,%
ButcherBobrenkov2011,%
ButcherMaBuelerAverinaSzabo2004,%
LehotzkyInsperger2016,%
LehotzkyInspergerStepan2016,%
KhasawnehMann2011a,%
KhasawnehMann2011b,%
KhasawnehMann2013,%
Breda2006,%
EngelborghsLuzyaninaRoose2002%
}.
They provide strong experimental evidence for convergence, but most of them lack a formal proof.

The aim of this work is twofold.
On the one hand, inspired by \cite{BredaMasetVermiglio2012,BredaLiessi2018}, we provide a unified convergence proof, identifying and highlighting the role of the essential hypotheses related to the class of equation, the choice of numerical method and their interplay.
For instance, it turns out that a key point is the regularizing effect that the equation has on its solutions (see \cref{H-FsVm-to-Xhat,H-FsVp-to-Xhat}).
On the other hand, the proof contained in this work is in fact more general and opens the way to the rigorous justification of the other methods mentioned above: an example is shown in \cref{sec:weightedresiduals}.

\bigskip

We conclude this introduction presenting the two classes of equations we are considering, namely linear RFDEs and REs.

Let $d \in \mathbb{N}$ and $\tau \in \mathbb{R}$ both positive and consider a space $X$ of functions $[-\tau, 0]\to\mathbb{R}^{d}$ equipped with a norm denoted by $\norm{\cdot}_{X}$.
For $s \in \mathbb{R}$ and a function $x$ defined on $[s - \tau, + \infty)$ let
\begin{equation}\label{segment}
x_{t}(\theta) \coloneqq x(t + \theta), \qquad t \geq s, \; \theta \in [-\tau, 0].
\end{equation}

A linear RFDE is an equation of the form 
\begin{equation}\label{RFDE}
x'(t) = L(t)x_t,
\end{equation}
where $x'$ denotes the right-hand derivative of $x$, $\mathbb{R} \times X \ni (t, \phi) \mapsto L(t) \phi \in \mathbb{R}^{d}$ is a continuous function, linear in the second argument, and the state space is $X \coloneqq C([-\tau, 0], \mathbb{R}^{d})$ with $\norm{\cdot}_{X}$ the usual uniform norm.
The properties of $L$ imply that $L(t) \colon X \to \mathbb{R}^{d}$ is a linear bounded functional for all $t \in \mathbb{R}$ and $L(\cdot) \phi \colon \mathbb{R} \to \mathbb{R}^{d}$ is a continuous function for all $\phi \in X$.
A typical form of RFDE, which we take as prototype, is
\begin{equation}\label{RFDE-prototype}
x'(t) = A(t) x(t) + \sum_{k = 1}^{r} B_k(t) x(t-\tau_{k})
+ \sum_{k = 1}^{r} \int_{-\tau_{k}}^{-\tau_{k-1}} C_k(t, \theta) x(t + \theta) \dd\theta,
\end{equation}
with $\tau_{0} \coloneqq 0 < \tau_{1} < \dots < \tau_{r} \coloneqq \tau$, which corresponds to
\begin{equation}\label{L_RFDE}
L(t)\phi = A(t) \phi(0) + \sum_{k = 1}^{r} B_k(t) \phi(-\tau_{k})
+ \sum_{k = 1}^{r} \int_{-\tau_{k}}^{-\tau_{k-1}} C_k(t, \theta) \phi(\theta) \dd\theta.
\end{equation}

A linear RE is an equation of the form
\begin{equation}\label{RFE}
x(t) = \int_{- \tau}^{0} C(t, \theta) x_{t}(\theta) \dd \theta,
\end{equation}
where $C \colon \mathbb{R} \times [-\tau, 0] \rightarrow \mathbb{R}^{d \times d}$ is a measurable function and the state space is $X \coloneqq L^{1}([-\tau, 0], \mathbb{R}^{d})$ with $\norm{\cdot}_{X}$ the usual $L^{1}$ norm.

We will assume that the initial value problem (IVP) defined by the relevant equation has a unique solution, so that it allows to define a dynamical system and the family of evolution operators (see, e.g., \cite{HaleVerduynLunel1993,BredaLiessi2018}).

\bigskip

The remaining sections of this work will be organized as follows. In \cref{sec:reformulation}, we recall the definition of evolution operators and describe a reformulation which is convenient for separating the role of the class of equation from that of the right-hand side.
In \cref{sec:numerical} we describe the discretization framework in detail, further exploiting the proposed reformulation.
In \cref{sec:convergence} we present the fixed-point equation arising from the reformulation, as well as our general convergence analysis.
Then in \cref{sec:case} we describe some numerical methods for RFDEs and REs and verify that our general approach allows us to prove convergence in these cases. Concluding comments follow in \cref{sec:discussion}.

\section{Evolution operators and a convenient reformulation}
\label{sec:reformulation}


Consider an interval $J$ of real numbers and let
\begin{equation*}
\triangle \coloneqq \{(t, s) \in \mathbb{R}^{2} \mid s, t \in J \text{ and } s \leq t\}.
\end{equation*}
Let $U=\{U(t,s)\}_{(t,s)\in\triangle}$ be a \emph{(forward) evolutionary system}, or \emph{evolution family}, on $X$ \cite{ChiconeLatushkin1999,DiekmannVanGilsVerduynLunelWalther1995}, i.e., a family of linear and bounded operators $U(t, s) \colon X \to X$ with the properties that $U(s,s)=I_X$ (the identity of $X$) for each $s\in J$ and $U(t,r)U(r,s)=U(t,s)$ (the \emph{semigroup law}) for each $(t,r), (r,s) \in \triangle$.
The evolutionary system $U$ is called \emph{strongly continuous} if for each $x \in X$ the function $\triangle \ni (t, s) \mapsto U(t, s) x \in X$ is continuous.
For a comprehensive treatment, see \cite{DiekmannVanGilsVerduynLunelWalther1995,Pazy1983,ChiconeLatushkin1999}.
In this work we mainly exploit the properties of determinism represented by the semigroup law, consequence of the well-posedness of the corresponding IVPs.
In this respect, due to the presence of delay, the initial history has a fundamental role.

Let $s \in J$ and let $h \in \mathbb{R}$ be positive and such that $s+h\in J$, and define for brevity
\begin{equation}\label{T}
T \coloneqq U(s + h, s).
\end{equation}
We assume that $T$ can be reformulated as follows.

Consider the space $X^{+}$ of functions $[0, h]\to\mathbb{R}^{d}$ and the space $X^{\pm}$ of functions $[-\tau, h]\to\mathbb{R}^{d}$ equipped, respectively, with the norms $\norm{\cdot}_{X^{+}}$ and $\norm{\cdot}_{X^{\pm}}$: these functions and norms are of the same kind as those of $X$.%
\footnote{Ideally, the problem described by the evolution family $U$ concerns functions defined on $[-\tau,+\infty)$ having a certain regularity, and the spaces $X$, $X^+$ and $X^{\pm}$ are obtained by restricting those functions to the respective intervals.
Observe in particular that an element of one of those spaces can be transformed into an element of another via translation of the variable and restriction to appropriate intervals or prolongation by, e.g., constant value $0$.}
We assume that there exist linear operators $V \colon X \times X^{+} \to X^{\pm}$ and $\mathcal{F}_{s} \colon X^{\pm} \to X^{+}$ such that $V(\phi,z)_0=V(\phi,z)\restriction_{[-\tau,0]}=\phi$ for each $(\phi,z)\in X\times X^+$ and, for each $\phi\in X$,
\begin{equation}\label{T-as-V}
T \phi = V(\phi, z^{\ast})_{h},
\end{equation}
where $z^{\ast} \in X^{+}$ is the solution of the fixed point equation 
\begin{equation}\label{fixed_point}
z = \mathcal{F}_{s} V(\phi, z),
\end{equation}
which we assume to exist uniquely.
Observe that in~\cref{T-as-V} the subscript~$h$ is used according to~\cref{segment}.
The operator $V$ has the role of reconstructing the solution from the initial history and the information given by the right-hand side.
Such information is given by the operator $\mathcal{F}_s$ based on the solution, and is obtained by solving \cref{fixed_point}.

For convenience, we also define $V^{-} \colon X \to X^{\pm}$ and $V^{+} \colon X^{+} \to X^{\pm}$ be given, respectively, by $V^{-} \phi \coloneqq V(\phi, 0_{X^{+}})$ and $V^{+} z \coloneqq V(0_{X}, z)$, where $0_{Y}$ denotes the null element of a linear space $Y$.
Observe that
\begin{equation}\label{decomp_V}
V(\phi, z) = V^{-} \phi + V^{+} z
\end{equation}
and that $(V^+ z)\restriction_{[-\tau,0]} = 0_X$.

\section{Numerical reduction to finite dimension}
\label{sec:numerical}

The problem is reduced to finite dimension by using two possibily different methods in $X$ and in $X^{+}$.
Let $M,N\in\mathbb{N}$ be the corresponding discretization indices; together they determine the dimension of the reduced problem.

For the reduction of $X$ we consider a finite dimensional space $X_M$ and a linear operator $R_M\colon \widetilde{X} \to X_M$, called \emph{restriction operator}, where $\widetilde{X}$ is a subspace of $X$ on which the chosen reduction method is defined.%
\footnote{This is motivated by the case of \cite{BredaLiessi2018}, in which the evolution operators of renewal equations are discretized with pseudospectral techniques involving interpolation: pointwise evaluation of functions does not make sense in the $L^1$ state space, but is well-defined in the subspace of continuous functions.}
For convenience of notation, assume that the dimension of $X_M$ is $d(M+1)$, although it need not be.
In the following we identify $X_M$ and $\mathbb{R}^{d(M+1)}$ via a certain basis, and given $\Phi\in X_M$ we denote its components as $(\Phi_0^T,\dots,\Phi_M^T)^T$, where $\Phi_i\in\mathbb{R}^d$ for each $i$.
We also consider a linear operator $P_M\colon X_M\to X$, called \emph{prolongation operator}, such that
\begin{equation*}
R_{M} P_{M} = I_{X_{M}}, \qquad
P_{M} R_{M} = \mathcal{L}_{M},
\end{equation*}
with $\mathcal{L}_{M}\colon \widetilde{X}\to X$ an operator representing the numerical reduction technique.
Given $\Phi \in X_M$, $P_M(\Phi)$ is the linear combination of certain basis functions $\phi^{(M)}_0,\dots,\phi^{(M)}_M$ with the coefficients $\Phi_0,\dots,\Phi_M$.
The range of $P_M$ and $\mathcal{L}_{M}$ is thus the space $\Pi_M\coloneqq\spanop\{\phi^{(M)}_0,\dots,\phi^{(M)}_M\}$
, which is a subspace of $X$.
We assume that $\Pi_M\subseteq \Pi_{M+1}$ for all $M\in\mathbb{N}$ (\cref{H-compat-1} in \cref{sec:hypotheses}).
If $h<\tau$, we adopt a piecewise approach for the discretization of $X$.
For simplicity of exposition and to avoid distracting the reader with technical details, from now on we assume $h\geq\tau$, leaving the development of the other case to \cref{sec:h-lt-tau}.%
\footnote{Besides the simplicity of exposition, given that our approach is aimed mainly at studying the stability via the spectrum of $T$, rather than time-integration of the system, considering the case $h\geq\tau$ is in practice not very restrictive, since typically taking a multiple of $h$ gives equivalent information in terms of stability.}

An analogous construction is considered for $X^+$, with spaces $X^+_N$ (identified with $\mathbb{R}^{d(N+1)}$) and $\widetilde{X}^+$, operators $R^+_N$, $P^+_N$, $\mathcal{L}^+_N$ and $\Pi^+_N$ and basis functions $z^{(N)}_0,\dots,z^{(N)}_N$.
In particular,
\begin{equation}\label{PR-RP-N}
R^+_{N} P^+_{N} = I_{X^+_{N}}, \qquad
P^+_{N} R^+_{N} = \mathcal{L}^+_{N},
\end{equation}
The chosen numerical reduction technique may be different from the one used for $X$, thus some compatibility conditions are required (\cref{H-compat-2} in \cref{sec:hypotheses}).
As above, we denote the components of $Z\in X^+_N$ as $(Z_0^T,\dots,Z_N^T)^T$.

The operator $T$ defined in \cref{T} and reformulated as \cref{T-as-V,fixed_point} is reduced to finite dimension as $T_{M, N} \colon X_{M} \to X_{M}$ defined as
\begin{equation}\label{TMN}
T_{M, N} \Phi \coloneqq R_{M} V(P_{M} \Phi, P_{N}^{+} Z^{\ast})_{h},
\end{equation}
where $Z^{\ast} \in X_{N}^{+}$ is a solution of the fixed point equation
\begin{equation}\label{discrete_FP}
Z = R_{N}^{+} \mathcal{F}_{s} V(P_{M} \Phi, P_{N}^{+} Z).
\end{equation}
We study the well-posedness of \cref{discrete_FP} in \cref{sec:fixedpoint}.

Thanks to \cref{decomp_V} and the linearity of the operators involved, $T_{M, N}$ can be rewritten as
\begin{equation*}
T_{M, N} \Phi = T_{M}^{(1)} \Phi + T_{M, N}^{(2)} Z^{\ast},
\end{equation*}
with $T_{M}^{(1)} \colon X_{M} \to X_{M}$ and $T_{M, N}^{(2)} \colon X_{N}^{+} \to X_{M}$ defined as
\begin{equation*}
T_{M}^{(1)} \Phi \coloneqq R_{M} (V^{-} P_{M} \Phi)_{h}, \qquad
T_{M, N}^{(2)} Z \coloneqq R_{M} (V^{+} P_{N}^{+} Z)_{h}.
\end{equation*}
Similarly, the fixed point equation~\cref{discrete_FP} can be rewritten as
\begin{equation*}
(I_{X_{N}^{+}} - U_{N}^{(2)}) Z = U_{M, N}^{(1)} \Phi,
\end{equation*}
with $U_{M, N}^{(1)} \colon X_{M} \to X_{N}^{+}$ and $U_{N}^{(2)} \colon X_{N}^{+} \to X_{N}^{+}$ defined as
\begin{equation*}
U_{M, N}^{(1)} \Phi \coloneqq R_{N}^{+} \mathcal{F}_{s} V^{-} P_{M} \Phi, \qquad
U_{N}^{(2)} Z \coloneqq R_{N}^{+} \mathcal{F}_{s} V^{+} P_{N}^{+} Z.
\end{equation*}
Since the fixed point equation \cref{fixed_point} admits a unique solution in $X^+$ for each $\phi\in X$, the operator $I_{X_{N}^{+}} - U_{N}^{(2)}$ is invertible and the operator $T_{M, N} \colon X_{M} \to X_{M}$ can be reformulated as
\begin{equation*}
T_{M, N} = T_{M}^{(1)} + T_{M, N}^{(2)} (I_{X_{N}^{+}} - U_{N}^{(2)})^{-1} U_{M, N}^{(1)}.
\end{equation*}
This reformulation is convenient to construct the matrix representation of $T_{M, N}$ for the implementation of specific methods (see the references given in \cref{sec:case}).

\section{Convergence analysis}
\label{sec:convergence}

In \cref{sec:hypotheses} we introduce the auxiliary spaces that are needed and we collect the hypotheses that ensure the convergence.
Then in \cref{sec:fixedpoint} we study the fixed-point equation and its discrete counterparts.
Finally, in \cref{sec:convergence-eigenvalues} we present the proof of convergence of the spectra based on the convergence in norm of a suitable auxiliary operator to $T$.

\subsection{Hypotheses and auxiliary spaces}
\label{sec:hypotheses}

The spaces $\widetilde{X}$ and $\widetilde{X}^+$ were considered for the applicability of the chosen reduction technique(s).
However, to ensure that the spectrum of $T_{M,N}$ converges to that of $T$ (in the sense that will be specified in \cref{convergence-theorem}), we need to introduce other assumptions and auxiliary spaces.
The following list collects all these hypotheses.

More specifically, \cref{H-I-FsVp} ensures the well-posedness of the problem; \cref{H-Xhatp} introduces the subspace $\widehat{X}^+$ of $X^+$ on which the functional approximation technique converges in operator norm (see in particular \cref{H-conv}); \cref{H-Xhat} introduces a subspace $\widehat{X}$ of $X$ which is mapped into $\widehat{X}^+$ by the relevant operators; \cref{H-compat-1,H-compat-2} (and also \cref{H-Pi-V-Xhat}) express conditions for the applicability of the discretization techniques and their mutual compatibility.

In particular, \cref{H-FsVp-to-Xhat,H-FsVm-to-Xhat} can be interpreted as the requirement that the delay equation has a regularizing effect on the solutions of the corresponding IVPs; \cref{H-compat-1} ensures that in \cref{TMN,discrete_FP} the operators $R_M$ and $R^+_N$ are applied to functions in $\widetilde{X}$ and $\widetilde{X}^+$, respectively; \cref{H-compat-2} takes care of the translation of the function in \cref{TMN} due to the $h$ subscript.

We recall that we are assuming $h\geq\tau$; alternative version of some hypotheses will be given in \cref{sec:h-lt-tau}.

\begin{H-hyp}
\item\label{H-I-FsVp} The operator $I_{X^{+}} - \mathcal{F}_{s} V^{+} \colon X^+ \to X^+$ is invertible with bounded inverse and~\cref{fixed_point} admits a unique solution in~$X^{+}$.
\item\label{H-Xhatp} There exists a subspace $\widehat{X}^+$ of $\widetilde{X}^+$, equipped with a norm $\norm{\cdot}_{\widehat{X}^{+}}$ that makes it complete, such that:
\begin{H-hyp}
\item\label{H-conv} $\norm{(\mathcal{L}^+_N-I_{X^+})\restriction_{\widehat{X}^+}}_{X^+\leftarrow \widehat{X}^+}\to 0$ as $N\to+\infty$;
\item\label{H-Pi-V-Xhat} $\Pi^+_N\subseteq \widehat{X}^+$ for each $N\in\mathbb{N}$;
\item\label{H-hat-norm} there exists $\hat{c}_1>0$ such that $\norm{\cdot}_{X^{+}} \leq \hat{c}_1 \norm{\cdot}_{\widehat{X}^{+}}$;
\item\label{H-FsVp-to-Xhat} the range of $\mathcal{F}_s V^+\colon X^+\to X^+$ is contained in $\widehat{X}^+$ and $\mathcal{F}_s V^+\colon X^+\to \widehat{X}^+$ is bounded.
\end{H-hyp}
\item\label{H-Xhat} There exists a subspace $\widehat{X}$ of $X$, equipped with a norm $\norm{\cdot}_{\widehat{X}}$ that makes it complete, such that
\begin{H-hyp}
\item\label{H-FsVm-to-Xhat} the range of $\mathcal{F}_s V^-\restriction_{\widehat{X}}\colon \widehat{X}\to X$ is contained in $\widehat{X}^+$ and $\mathcal{F}_s V^-\restriction_{\widehat{X}}\colon \widehat{X}\to \widehat{X}^+$ is bounded;
\item\label{H-Vp-Hhatp-Xhat} $V(\phi,z)_h\in \widehat{X}$ for each $(\phi,z)\in\widehat{X}\times\widehat{X}^+$;
\item\label{H-norm-Vp-z-h} there exists $\hat{c}_2>0$ such that $\norm{(V^+ z)_h}_{\widehat{X}}\leq \hat{c}_2\norm{z}_{X^+}$ for each $z\in\widehat{X}^+$;
\item\label{H-V-Xhat-chain} $V(\phi,z)_h\in \widehat{X}$ for each $(\phi,z)\in X\times\widehat{X}^+$.
\end{H-hyp}
\item\label{H-compat-1} For each $M,N\in\mathbb{N}$, $\Pi_M\subseteq\Pi_{M+1}$, $\Pi^+_N\subseteq\Pi^+_{N+1}$, $\Pi_M\subseteq \widetilde{X}$, $\Pi^+_N\subseteq\widetilde{X}^+$, and, given $(\phi,z) \in \Pi_M\times\Pi^+_N$, $V(\phi,z)_h \in \widetilde{X}$ and $\mathcal{F}_s V(\phi,z) \in \widetilde{X}^+$.
\item\label{H-compat-2} $V(\phi,z)_h\in\Pi_M$ for each $(\phi,z)\in X\times\Pi^+_N$.
\end{H-hyp}

\subsection{The fixed-point equation}
\label{sec:fixedpoint}

We now study the fixed-point equation~\cref{discrete_FP} for a generic function in $X$.
Let $\phi \in X$ and consider the fixed-point equation
\begin{equation}\label{collocation}
Z = R_{N}^{+} \mathcal{F}_{s} V(\phi, P_{N}^{+} Z)
\end{equation}
in $Z \in X_{N}^{+}$.
Our aim is to show that~\cref{collocation} has a unique solution and to study its relation to the unique solution $z^{\ast} \in X^{+}$ of~\cref{fixed_point}.

Observe that $R^+_N$ can be applied in \cref{collocation} only if $\mathcal{F}_{s} V(\phi, P_{N}^{+} Z) \in \widetilde{X}^+$.
In the following we will ensure that this is the case by assuming \cref{H-FsVp-to-Xhat,H-FsVm-to-Xhat} and considering $\phi\in\widehat{X}$.

Using~\cref{decomp_V}, the equations~\cref{fixed_point,collocation} can be rewritten, respectively, as $(I_{X^{+}} - \mathcal{F}_{s} V^{+}) z = \mathcal{F}_{s} V^{-} \phi$ and
\begin{equation}\label{collocation_2}
(I_{X_{N}^{+}} - R_{N}^{+} \mathcal{F}_{s} V^{+} P_{N}^{+}) Z = R_{N}^{+} \mathcal{F}_{s} V^{-} \phi.
\end{equation}
We are thus interested in the invertibility of the operator
\begin{equation}\label{discr_coll_op}
I_{X_{N}^{+}} - R_{N}^{+} \mathcal{F}_{s} V^{+} P_{N}^{+} \colon X_{N}^{+} \to X_{N}^{+}.
\end{equation}
The next result shows its relation to the invertibility of the operator
\begin{equation}\label{cont_coll_op}
I_{X^{+}} - \mathcal{L}_{N}^{+} \mathcal{F}_{s} V^{+} \colon X^{+} \to X^{+}.
\end{equation}

\begin{proposition}\label{discr-cont-coll_eq}
Assume \cref{H-FsVp-to-Xhat}.
If the operator~\cref{cont_coll_op} is invertible, then the operator~\cref{discr_coll_op} is invertible.
Moreover, given $\overline{Z} \in X_{N}^{+}$, the unique solution $\hat{z} \in X^{+}$ of
\begin{equation}\label{cont_coll_eq}
(I_{X^{+}} - \mathcal{L}_{N}^{+} \mathcal{F}_{s} V^{+}) z  = P_{N}^{+} \overline{Z}
\end{equation}
and the unique solution $\widehat{Z} \in X_{N}^{+}$ of
\begin{equation}\label{discr_coll_eq}
(I_{X_{N}^{+}} - R_{N}^{+} \mathcal{F}_{s} V^{+} P_{N}^{+}) Z = \overline{Z}
\end{equation}
are related by $\widehat{Z} = R_{N}^{+} \hat{z}$ and $\hat{z} = P_{N}^{+} \widehat{Z}$.
\end{proposition}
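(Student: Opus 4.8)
The plan is to exploit the algebraic relations $R^+_N P^+_N = I_{X^+_N}$ and $P^+_N R^+_N = \mathcal{L}^+_N$ together with the key observation, guaranteed by \cref{H-FsVp-to-Xhat} and \cref{H-Pi-V-Xhat}, that $\mathcal{F}_s V^+$ maps all of $X^+$ into $\widehat{X}^+ \subseteq \widetilde{X}^+$, so that $R^+_N$ may legitimately be applied to $\mathcal{F}_s V^+ z$ for any $z \in X^+$, and in particular to $\mathcal{F}_s V^+ P^+_N Z$. First I would show that the two operators are conjugate in the following sense: from $P^+_N R^+_N = \mathcal{L}^+_N$ one gets
\begin{equation*}
P^+_N (I_{X^+_N} - R^+_N \mathcal{F}_s V^+ P^+_N) = (I_{X^+} - \mathcal{L}^+_N \mathcal{F}_s V^+) P^+_N,
\end{equation*}
since $P^+_N R^+_N \mathcal{F}_s V^+ P^+_N = \mathcal{L}^+_N \mathcal{F}_s V^+ P^+_N$. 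Applying $R^+_N$ on the left to the same identity and using $R^+_N P^+_N = I_{X^+_N}$ gives
\begin{equation*}
(I_{X^+_N} - R^+_N \mathcal{F}_s V^+ P^+_N) = R^+_N (I_{X^+} - \mathcal{L}^+_N \mathcal{F}_s V^+) P^+_N.
\end{equation*}
These two intertwining relations are the whole engine of the argument.

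Next I would prove invertibility of \cref{discr_coll_op}. Since $X^+_N$ is finite-dimensional it suffices to show the operator is injective. Suppose $(I_{X^+_N} - R^+_N \mathcal{F}_s V^+ P^+_N) Z = 0$ for some $Z \in X^+_N$. Apply $P^+_N$ and use the first intertwining relation to get $(I_{X^+} - \mathcal{L}^+_N \mathcal{F}_s V^+)(P^+_N Z) = 0$; since \cref{cont_coll_op} is assumed invertible, $P^+_N Z = 0$, and then $Z = R^+_N P^+_N Z = 0$. Hence \cref{discr_coll_op} is invertible.

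For the correspondence of solutions, take $\overline{Z} \in X^+_N$ and let $\widehat{Z}$ be the unique solution of \cref{discr_coll_eq}. Apply $P^+_N$ and use the first intertwining relation to obtain $(I_{X^+} - \mathcal{L}^+_N \mathcal{F}_s V^+)(P^+_N \widehat{Z}) = P^+_N \overline{Z}$, so $P^+_N \widehat{Z}$ solves \cref{cont_coll_eq}; by uniqueness $\hat{z} = P^+_N \widehat{Z}$. Conversely, apply $R^+_N$ to \cref{cont_coll_eq} — legitimate because both sides lie in $\widetilde{X}^+$, the right-hand side being $P^+_N\overline{Z}\in\Pi^+_N\subseteq\widehat{X}^+$ and $\mathcal{L}^+_N \mathcal{F}_s V^+ z = P^+_N R^+_N \mathcal{F}_s V^+ z \in \Pi^+_N$ as well — and use $R^+_N \mathcal{L}^+_N = R^+_N P^+_N R^+_N = R^+_N$ to get $(I_{X^+_N} - R^+_N \mathcal{F}_s V^+)(R^+_N \hat{z}) = \dots$; more carefully, from the second intertwining relation applied to $R^+_N \hat z$ one has $(I_{X^+_N} - R^+_N \mathcal{F}_s V^+ P^+_N)(R^+_N \hat z) = R^+_N(I_{X^+} - \mathcal{L}^+_N \mathcal{F}_s V^+)(P^+_N R^+_N \hat z)$, and since $P^+_N R^+_N \hat z = \mathcal{L}^+_N \hat z$ while $\hat z$ itself satisfies \cref{cont_coll_eq}, a short manipulation using $\mathcal{L}^+_N \hat z$ vs.\ $\hat z$ and the fact that $\mathcal{L}^+_N \mathcal{F}_s V^+$ already has range in $\Pi^+_N$ shows the right-hand side equals $R^+_N P^+_N \overline Z = \overline Z$; hence $R^+_N \hat z$ solves \cref{discr_coll_eq}, so $\widehat Z = R^+_N \hat z$ by uniqueness. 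Combining, $\hat z = P^+_N \widehat Z = P^+_N R^+_N \hat z = \mathcal{L}^+_N \hat z$, which is consistent.

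The main obstacle I anticipate is the careful bookkeeping of domains: one must repeatedly justify that $R^+_N$ is applied only to elements of $\widetilde{X}^+$, and that the image of $\hat z$ under $\mathcal{F}_s V^+$ composed appropriately lands where intended — this is exactly where \cref{H-FsVp-to-Xhat} (range of $\mathcal{F}_s V^+$ in $\widehat{X}^+$) and \cref{H-Pi-V-Xhat} ($\Pi^+_N\subseteq\widehat{X}^+\subseteq\widetilde{X}^+$) are indispensable. The purely algebraic intertwining is routine once those well-definedness issues are cleared; I would state the two intertwining identities as a preliminary computation and then deduce injectivity and the solution correspondence in a few lines each.
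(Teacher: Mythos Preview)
Your proof is correct and follows essentially the same route as the paper: both arguments rest on the identities $R^+_N P^+_N = I_{X^+_N}$, $P^+_N R^+_N = \mathcal{L}^+_N$, and the fact that \cref{H-FsVp-to-Xhat} places the range of $\mathcal{F}_s V^+$ inside $\widetilde{X}^+$ so that $R^+_N$ and $\mathcal{L}^+_N$ apply. You organize the algebra via two intertwining identities and then use finite-dimensionality to pass from injectivity to invertibility, whereas the paper proceeds by direct substitution: it first shows that $\hat z \in \Pi^+_N$ (your final observation $\hat z = \mathcal{L}^+_N \hat z$), then that $R^+_N \hat z$ solves \cref{discr_coll_eq}, and finally that any solution $\widehat Z$ of \cref{discr_coll_eq} satisfies $P^+_N \widehat Z = \hat z$, from which uniqueness follows. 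The difference is packaging, not substance.

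One comment: your ``converse'' step is more laboured than it needs to be. Once you have $\hat z = P^+_N \widehat Z$, applying $R^+_N$ gives $R^+_N \hat z = R^+_N P^+_N \widehat Z = \widehat Z$ in one line; there is no need for the second intertwining relation or the ``short manipulation'' involving $\mathcal{L}^+_N \hat z$ versus $\hat z$. (That manipulation does work---it amounts to observing $\hat z \in \Pi^+_N$, which you record at the end anyway---but it is redundant.) Also, the inclusion $\Pi^+_N \subseteq \widetilde{X}^+$ that you attribute to \cref{H-Pi-V-Xhat} is already implicit in the standing relation $R^+_N P^+_N = I_{X^+_N}$, so no extra hypothesis beyond \cref{H-FsVp-to-Xhat} is needed, in line with the statement.
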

\begin{proof}
Observe that thanks to \cref{H-FsVp-to-Xhat} the range of $\mathcal{F}_{s} V^{+}$ is contained in $\widehat{X}^+\subseteq\widetilde{X}^+$, so the application of $R^+_N$ and $\mathcal{L}^+_{N}$ in \cref{discr_coll_eq,discr_coll_op,cont_coll_eq,cont_coll_op} makes sense.%
\footnote{For \cref{discr_coll_op,discr_coll_eq} only, the same is guaranteed also by \cref{H-compat-1}, which indeed implies that $\mathcal{F}_{s} V^{+} P_{N}^{+} Z \in \widetilde{X}^+$.}
If~\cref{cont_coll_op} is invertible, then, given $\overline{Z} \in X_{N}^{+}$, \cref{cont_coll_eq} has a unique solution, say $\hat{z} \in X^{+}$.
Then, by~\cref{PR-RP-N},
\begin{equation}\label{hat_eq_1}
\hat{z} = P_{N}^{+}(R_{N}^{+} \mathcal{F}_{s} V^{+} \hat{z} + \overline{Z})
\end{equation}
and
\begin{equation}\label{hat_eq_2}
R_{N}^{+} \hat{z} = R_{N}^{+} \mathcal{F}_{s} V^{+} \hat{z} + \overline{Z}
\end{equation}
hold.
Hence, by substituting~\cref{hat_eq_2} in~\cref{hat_eq_1},
\begin{equation}\label{hat_eq_3}
\hat{z} = P_{N}^{+} R_{N}^{+} \hat{z}
\end{equation}
and, by substituting~\cref{hat_eq_3} in~\cref{hat_eq_2}, $R_{N}^{+} \hat{z} = R_{N}^{+} \mathcal{F}_{s} V^{+} P_{N}^{+} R_{N}^{+} \hat{z} + \overline{Z}$, i.e., $R_{N}^{+} \hat{z}$ is a solution of~\cref{discr_coll_eq}.

Vice versa, if $\widehat{Z} \in X_{N}^{+}$ is a solution of~\cref{discr_coll_eq}, then $P_{N}^{+} \widehat{Z} = \mathcal{L}_{N}^{+} \mathcal{F}_{s} V^{+} P_{N}^{+} \widehat{Z} + P_{N}^{+} \overline{Z}$ holds again by~\cref{PR-RP-N}, i.e., $P_{N}^{+} \widehat{Z}$ is a solution of~\cref{cont_coll_eq}.
Hence, by uniqueness, $\hat{z} = P_{N}^{+} \widehat{Z}$ holds.

Finally, if $\widehat{Z}_{1}, \widehat{Z}_{2} \in X_{N}^{+}$ are solutions of~\cref{discr_coll_eq}, then $P_{N}^{+} \widehat{Z}_{1} = \hat{z} = P_{N}^{+} \widehat{Z}_{2}$ and, once again by~\cref{PR-RP-N}, $\widehat{Z}_{1} = R_{N}^{+} P_{N}^{+} \widehat{Z}_{1} = R_{N}^{+} P_{N}^{+} \widehat{Z}_{2} = \widehat{Z}_{2}$.
Therefore $\widehat{Z} \coloneqq R_{N}^{+} \hat{z}$ is the unique solution of~\cref{discr_coll_eq} and the operator~\cref{discr_coll_op} is invertible.
\end{proof}

\begin{corollary}\label{coll_hat_corollary}
Assume \cref{H-FsVp-to-Xhat,H-FsVm-to-Xhat}.
If the operator \cref{cont_coll_op} is invertible, then for each $\phi\in\widehat{X}$
\begin{equation}\label{collocation_hat}
z = \mathcal{L}_{N}^{+} \mathcal{F}_{s} V(\phi, z)
\end{equation}
has a unique solution $w^* \in X^+$, \cref{collocation} has a unique solution $Z^*\in X^+_N$, and $Z^*=R^+_N w^*$ and $w^*=P^+_N Z^*$ hold.
\end{corollary}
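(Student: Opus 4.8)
The plan is to reduce \cref{collocation_hat,collocation} to the linear equations \cref{cont_coll_eq,discr_coll_eq} already analysed in \cref{discr-cont-coll_eq} and then invoke that proposition directly, with a single, explicit right-hand side. First I would fix $\phi\in\widehat{X}$ and use the decomposition \cref{decomp_V} together with the linearity of $\mathcal{F}_s$, $V$, $\mathcal{L}_N^+$, $R_N^+$, $P_N^+$ to rewrite \cref{collocation_hat} as $(I_{X^{+}}-\mathcal{L}_N^+\mathcal{F}_s V^+)z=\mathcal{L}_N^+\mathcal{F}_s V^-\phi$ and \cref{collocation} as \cref{collocation_2}, that is $(I_{X_N^+}-R_N^+\mathcal{F}_s V^+ P_N^+)Z=R_N^+\mathcal{F}_s V^-\phi$. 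Before doing so one has to check that every restriction/prolongation operator is applied to an admissible argument: by \cref{H-FsVm-to-Xhat} we have $\mathcal{F}_s V^-\phi\in\widehat{X}^+\subseteq\widetilde{X}^+$, so both $R_N^+$ and $\mathcal{L}_N^+=P_N^+R_N^+$ may act on it; by \cref{H-FsVp-to-Xhat} the range of $\mathcal{F}_s V^+$ lies in $\widehat{X}^+\subseteq\widetilde{X}^+$; and for $Z\in X_N^+$ one has $P_N^+Z\in\Pi_N^+\subseteq\widehat{X}^+$ by \cref{H-Pi-V-Xhat}, whence $\mathcal{F}_s V(\phi,P_N^+Z)=\mathcal{F}_s V^-\phi+\mathcal{F}_s V^+P_N^+Z\in\widehat{X}^+\subseteq\widetilde{X}^+$, so that $R_N^+$ is legitimately applied in \cref{collocation}. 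These rewritings are equivalences, not merely formal manipulations.

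Next I would set $\overline{Z}\coloneqq R_N^+\mathcal{F}_s V^-\phi\in X_N^+$, so that $\mathcal{L}_N^+\mathcal{F}_s V^-\phi=P_N^+R_N^+\mathcal{F}_s V^-\phi=P_N^+\overline{Z}$ by \cref{PR-RP-N}. Then the rewritten form of \cref{collocation_hat} is precisely \cref{cont_coll_eq} with this $\overline{Z}$, and the rewritten form of \cref{collocation}, namely \cref{collocation_2}, is precisely \cref{discr_coll_eq} with the same $\overline{Z}$. Since \cref{cont_coll_op} is assumed invertible, \cref{cont_coll_eq} has a unique solution in $X^+$, which I would denote $w^*$; this gives existence and uniqueness for \cref{collocation_hat}. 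Applying \cref{discr-cont-coll_eq} (its standing hypothesis \cref{H-FsVp-to-Xhat} being assumed), the operator \cref{discr_coll_op} is invertible, \cref{discr_coll_eq} — hence \cref{collocation} — has a unique solution $Z^*\in X_N^+$, and the proposition furnishes the relations $Z^*=R_N^+w^*$ and $w^*=P_N^+Z^*$, completing the proof.

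I do not expect any genuine analytic difficulty: once \cref{discr-cont-coll_eq} is in hand the argument is pure bookkeeping built on \cref{decomp_V} and \cref{PR-RP-N}. The only point requiring some care — and hence the ``main obstacle'', such as it is — is verifying the membership conditions that make the identification of \cref{collocation_hat,collocation} with \cref{cont_coll_eq,discr_coll_eq} literal rather than merely formal (so that $R_N^+$ and $\mathcal{L}_N^+$ are applied to elements of $\widetilde{X}^+$), and making sure that the two-sided correspondence $Z^*\leftrightarrow w^*$ is transported faithfully through \cref{discr-cont-coll_eq}, including the uniqueness in both directions.
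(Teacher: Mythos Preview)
Your proposal is correct and follows essentially the same route as the paper's proof: rewrite \cref{collocation_hat,collocation} via \cref{decomp_V} and \cref{PR-RP-N}, set $\overline{Z}=R_N^+\mathcal{F}_s V^-\phi$, identify the resulting equations with \cref{cont_coll_eq,discr_coll_eq}, and invoke \cref{discr-cont-coll_eq}. One minor remark: you cite \cref{H-Pi-V-Xhat} to justify $\mathcal{F}_s V^+P_N^+Z\in\widehat{X}^+$, but that hypothesis is not among those assumed in the corollary and is in fact unnecessary here, since \cref{H-FsVp-to-Xhat} already places the whole range of $\mathcal{F}_s V^+$ (applied to any element of $X^+$, in particular to $P_N^+Z$) inside $\widehat{X}^+$.
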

\begin{proof}
Observe that thanks to \cref{H-FsVm-to-Xhat} $\mathcal{F}_{s} V^{-} \phi \in \widehat{X}^+\subseteq\widetilde{X}^+$; together with the consequences of \cref{H-FsVp-to-Xhat} mentioned in the previous proof, this implies that the application of $R^+_N$ and $\mathcal{L}^+_{N}$ in \cref{collocation_hat,collocation,collocation_2} makes sense.

As observed above, \cref{collocation} is equivalent to~\cref{collocation_2}, hence, by choosing
\begin{equation}\label{Wbar}
\overline{Z} = R_{N}^{+} \mathcal{F}_{s} V^{-} \phi,
\end{equation}
it is equivalent to~\cref{discr_coll_eq}.
Observe also that, thanks to~\cref{PR-RP-N}, \cref{collocation_hat} can be rewritten as $(I_{X^{+}} - \mathcal{L}_{N}^{+} \mathcal{F}_{s} V^{+}) z
= \mathcal{L}_{N}^{+} \mathcal{F}_{s} V^{-} \phi
= P_{N}^{+} R_{N}^{+} \mathcal{F}_{s} V^{-} \phi$,
which is equivalent to~\cref{cont_coll_eq} with the choice~\cref{Wbar}.
Thus, by \cref{discr-cont-coll_eq}, if the operator~\cref{cont_coll_op} is invertible, then the equation~\cref{collocation} has a unique solution $Z^{\ast} \in X_{N}^{+}$ such that
\begin{equation}\label{stars-vs-hats}
Z^{\ast} = R_{N}^{+} w^{\ast}, \qquad
w^{\ast} = P_{N}^{+} Z^{\ast},
\end{equation}
where $w^{\ast} \in X^{+}$ is the unique solution of~\cref{collocation_hat}.
Note for clarity that~\cref{Wbar} implies $w^{\ast} = \hat{z}$ for~$\hat{z}$ in \cref{discr-cont-coll_eq}.
\end{proof}

The next result shows that~\cref{cont_coll_op} is invertible under suitable assumptions.

\begin{proposition}\label{cont_coll_op-invertible}
If \cref{H-I-FsVp,H-FsVp-to-Xhat,H-conv,H-FsVm-to-Xhat} hold, then there exists a positive integer $N_{0}$ such that, for any $N \geq N_{0}$, the operator~\cref{cont_coll_op} is invertible and
\begin{equation*}
\norm{(I_{X^{+}} - \mathcal{L}_{N}^{+} \mathcal{F}_{s} V^{+})^{- 1}}_{X^{+} \leftarrow X^{+}} \leq 2 \norm{(I_{X^{+}} - \mathcal{F}_{s} V^{+})^{- 1}}_{X^{+} \leftarrow X^{+}}.
\end{equation*}
Moreover, for each $\phi \in \widehat{X}$, \cref{collocation_hat} has a unique solution $w^{\ast} \in X^{+}$ and
\begin{equation*}
\norm{w^{\ast} - z^{\ast}}_{X^{+}} \leq 2 \norm{(I_{X^{+}} - \mathcal{F}_{s} V^{+})^{- 1}}_{X^{+} \leftarrow X^{+}} \norm{\mathcal{L}_{N}^{+} z^{\ast} - z^{\ast}}_{X^{+}},
\end{equation*}
where $z^{\ast} \in X^{+}$ is the unique solution of~\cref{fixed_point}.
\end{proposition}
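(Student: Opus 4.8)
The plan is to treat $I_{X^{+}}-\mathcal{L}_N^{+}\mathcal{F}_{s}V^{+}$ as a perturbation of $I_{X^{+}}-\mathcal{F}_{s}V^{+}$, which by \cref{H-I-FsVp} is invertible with bounded inverse. Using the identity
\[
I_{X^{+}}-\mathcal{L}_N^{+}\mathcal{F}_{s}V^{+}=\Bigl(I_{X^{+}}-(\mathcal{L}_N^{+}-I_{X^{+}})\,\mathcal{F}_{s}V^{+}\,(I_{X^{+}}-\mathcal{F}_{s}V^{+})^{-1}\Bigr)\,(I_{X^{+}}-\mathcal{F}_{s}V^{+}),
\]
it suffices to make the first (bracketed) factor invertible by a Neumann-series argument. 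The key point is that, although $\mathcal{L}_N^{+}-I_{X^{+}}$ is not small in $\mathcal{B}(X^{+})$, it is small on the auxiliary space $\widehat{X}^{+}$ by \cref{H-conv}, while by \cref{H-FsVp-to-Xhat} the operator $\mathcal{F}_{s}V^{+}$ maps $X^{+}$ boundedly into $\widehat{X}^{+}$; factoring the composition through $\widehat{X}^{+}$ therefore yields $\norm{(\mathcal{L}_N^{+}-I_{X^{+}})\mathcal{F}_{s}V^{+}}_{X^{+}\leftarrow X^{+}}\le\norm{(\mathcal{L}_N^{+}-I_{X^{+}})\restriction_{\widehat{X}^{+}}}_{X^{+}\leftarrow\widehat{X}^{+}}\norm{\mathcal{F}_{s}V^{+}}_{\widehat{X}^{+}\leftarrow X^{+}}\to 0$ as $N\to+\infty$, and hence also $\norm{(\mathcal{L}_N^{+}-I_{X^{+}})\mathcal{F}_{s}V^{+}(I_{X^{+}}-\mathcal{F}_{s}V^{+})^{-1}}_{X^{+}\leftarrow X^{+}}\to 0$. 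I would then take $N_{0}$ so that this last quantity is at most $1/2$ for every $N\ge N_{0}$; the Banach lemma makes the bracketed factor invertible with inverse of norm at most $2$, which gives invertibility of \cref{cont_coll_op} and the stated bound $\norm{(I_{X^{+}}-\mathcal{L}_N^{+}\mathcal{F}_{s}V^{+})^{-1}}_{X^{+}\leftarrow X^{+}}\le 2\norm{(I_{X^{+}}-\mathcal{F}_{s}V^{+})^{-1}}_{X^{+}\leftarrow X^{+}}$.

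For the ``moreover'' part, once \cref{cont_coll_op} is invertible, existence and uniqueness of the solution $w^{\ast}\in X^{+}$ of \cref{collocation_hat} for each $\phi\in\widehat{X}$ follows immediately from \cref{coll_hat_corollary} (this is where \cref{H-FsVm-to-Xhat} enters, ensuring $\mathcal{F}_{s}V^{-}\phi\in\widehat{X}^{+}\subseteq\widetilde{X}^{+}$). For the error estimate, rewrite \cref{fixed_point} and \cref{collocation_hat} via \cref{decomp_V} as $(I_{X^{+}}-\mathcal{F}_{s}V^{+})z^{\ast}=\mathcal{F}_{s}V^{-}\phi$ and $(I_{X^{+}}-\mathcal{L}_N^{+}\mathcal{F}_{s}V^{+})w^{\ast}=\mathcal{L}_N^{+}\mathcal{F}_{s}V^{-}\phi$; note $z^{\ast}\in\widehat{X}^{+}$ since $\mathcal{F}_{s}V^{-}\phi\in\widehat{X}^{+}$ and $\mathcal{F}_{s}V^{+}z^{\ast}\in\widehat{X}^{+}$, so that $\mathcal{L}_N^{+}z^{\ast}$ makes sense. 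Substituting $\mathcal{F}_{s}V^{-}\phi=(I_{X^{+}}-\mathcal{F}_{s}V^{+})z^{\ast}$ into the second equation and simplifying produces the identity $(I_{X^{+}}-\mathcal{L}_N^{+}\mathcal{F}_{s}V^{+})(w^{\ast}-z^{\ast})=\mathcal{L}_N^{+}z^{\ast}-z^{\ast}$, and applying the operator-norm bound from the first part gives exactly $\norm{w^{\ast}-z^{\ast}}_{X^{+}}\le 2\norm{(I_{X^{+}}-\mathcal{F}_{s}V^{+})^{-1}}_{X^{+}\leftarrow X^{+}}\norm{\mathcal{L}_N^{+}z^{\ast}-z^{\ast}}_{X^{+}}$.

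I expect the main (and really the only) delicate point to be the one already flagged: the discretization error $\mathcal{L}_N^{+}-I_{X^{+}}$ does not converge to zero uniformly on $X^{+}$, so the perturbation argument must be routed through the smoothing operator $\mathcal{F}_{s}V^{+}$ and the subspace $\widehat{X}^{+}$ on which convergence does hold — this is precisely the purpose of \cref{H-FsVp-to-Xhat,H-FsVm-to-Xhat}. Beyond that, the only care needed is bookkeeping: checking at each step that $\mathcal{L}_N^{+}$ (and $R_N^{+}$, when one passes through \cref{coll_hat_corollary}) is applied only to elements of $\widetilde{X}^{+}$, which these same hypotheses supply.
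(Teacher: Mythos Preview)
Your proof is correct and follows essentially the same approach as the paper: both treat $I_{X^{+}}-\mathcal{L}_N^{+}\mathcal{F}_{s}V^{+}$ as a perturbation of $I_{X^{+}}-\mathcal{F}_{s}V^{+}$, factor $(\mathcal{L}_N^{+}-I_{X^{+}})\mathcal{F}_{s}V^{+}$ through $\widehat{X}^{+}$ to obtain smallness from \cref{H-conv,H-FsVp-to-Xhat}, and derive the identity $(I_{X^{+}}-\mathcal{L}_N^{+}\mathcal{F}_{s}V^{+})(w^{\ast}-z^{\ast})=(\mathcal{L}_N^{+}-I_{X^{+}})z^{\ast}$ for the error bound. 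The only cosmetic difference is that the paper cites the Banach perturbation lemma directly on $A-B$ with $A=I_{X^{+}}-\mathcal{F}_{s}V^{+}$ and $B=(\mathcal{L}_N^{+}-I_{X^{+}})\mathcal{F}_{s}V^{+}$, whereas you write out the equivalent factorization $(I_{X^{+}}-BA^{-1})A$ and apply a Neumann series to the first factor.
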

\begin{proof}
In this proof, let $I \coloneqq I_{X^{+}}$.
By \cref{H-FsVp-to-Xhat,H-conv},
\begin{equation*}
\norm{(\mathcal{L}_{N}^{+} - I) \mathcal{F}_{s} V^{+}}_{X^{+} \leftarrow X^{+}} \leq \norm{(\mathcal{L}^+_N-I)\restriction_{\widehat{X}^+}}_{X^+\leftarrow \widehat{X}^+} \norm{\mathcal{F}_{s} V^{+}}_{\widehat{X}^{+}\leftarrow X^{+}} \xrightarrow[N \to \infty]{} 0.
\end{equation*}
In particular, there exists a positive integer $N_{0}$ such that, for each integer $N \geq N_{0}$,
\begin{equation*}
\norm{(\mathcal{L}_{N}^{+} - I) \mathcal{F}_{s} V^{+}}_{X^{+} \leftarrow X^{+}} \leq \frac{1}{2 \norm{(I - \mathcal{F}_{s} V^{+})^{- 1}}_{X^{+} \leftarrow X^{+}}},
\end{equation*}
i.e., $\norm{(\mathcal{L}_{N}^{+} - I) \mathcal{F}_{s} V^{+}}_{X^{+} \leftarrow X^{+}} \norm{(I - \mathcal{F}_{s} V^{+})^{- 1}}_{X^{+} \leftarrow X^{+}} \leq \frac{1}{2}$,
which holds since $I - \mathcal{F}_{s} V^{+}$ is invertible with bounded inverse by virtue of \cref{H-I-FsVp}.

Considering the operator $I - \mathcal{L}_{N}^{+} \mathcal{F}_{s} V^{+}$ as a perturbed version of $I - \mathcal{F}_{s} V^{+}$ and writing $I - \mathcal{L}_{N}^{+} \mathcal{F}_{s} V^{+} = I - \mathcal{F}_{s} V^{+} - (\mathcal{L}_{N}^{+} - I) \mathcal{F}_{s} V^{+}$, by the Banach perturbation lemma \cite[Theorem 10.1]{Kress2014}, for each integer $N \geq N_{0}$ the operator $I - \mathcal{L}_{N}^{+} \mathcal{F}_{s} V^{+}$ is invertible and
\begin{equation*}
\begin{split}
\norm{(I - \mathcal{L}_{N}^{+} \mathcal{F}_{s} V^{+})^{- 1}}_{X^{+} \leftarrow X^{+}}
&\leq \frac{\norm{(I - \mathcal{F}_{s} V^{+})^{- 1}}_{X^{+} \leftarrow X^{+}}}{1 - \norm{(I - \mathcal{F}_{s} V^{+})^{- 1} ((\mathcal{L}_{N}^{+} - I) \mathcal{F}_{s} V^{+})}_{X^{+} \leftarrow X^{+}}} \\
&\leq 2 \norm{(I - \mathcal{F}_{s} V^{+})^{- 1}}_{X^{+} \leftarrow X^{+}}.
\end{split}
\end{equation*}
Hence, fixed $\phi \in \widehat{X}$, \cref{collocation_hat} has a unique solution $w^{\ast} \in X^{+}$.
For the same $\phi$, let $e^{\ast} \in X^{+}$ such that $w^{\ast} = z^{\ast} + e^{\ast}$, where $z^{\ast} \in X^{+}$ is the unique solution of~\cref{fixed_point}.
Then $z^{\ast} + e^{\ast}
= \mathcal{L}_{N}^{+} \mathcal{F}_{s} V(\phi, z^{\ast} + e^{\ast})
= \mathcal{L}_{N}^{+} \mathcal{F}_{s} V(\phi, z^{\ast}) + \mathcal{L}_{N}^{+} \mathcal{F}_{s} V^{+} e^{\ast} 
= \mathcal{L}_{N}^{+} z^{\ast}  + \mathcal{L}_{N}^{+} \mathcal{F}_{s} V^{+} e^{\ast}$ 
and $(I - \mathcal{L}_{N}^{+} \mathcal{F}_{s} V^{+}) e^{\ast} = (\mathcal{L}_{N}^{+} - I) z^{\ast}$, completing the proof.
\end{proof}

\subsection{Convergence of the eigenvalues}
\label{sec:convergence-eigenvalues}

We introduce a finite-rank infinite-dimensional operator $\widehat{T}_{M, N}$ associated to $T_{M, N}$ and show the relation between their spectra.

\begin{proposition}\label{TMN-hTMN-same}
Assume \cref{H-compat-1}.
The operator $T_{M, N}$ has the same nonzero eigenvalues, with the same geometric and partial multiplicities, of the operator
\begin{equation*}
\widehat{T}_{M, N} \coloneqq P_{M} T_{M, N} R_{M} \colon \widetilde{X} \to \widetilde{X}.
\end{equation*}
Moreover, if $\Phi \in X_{M}$ is an eigenvector of $T_{M, N}$ associated to a nonzero eigenvalue~$\mu$, then $P_{M} \Phi \in \widetilde{X}$ is an eigenvector of $\widehat{T}_{M, N}$ associated to the same eigenvalue $\mu$.
\end{proposition}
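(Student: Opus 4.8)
The statement is the standard fact that conjugation by the pair $(P_M,R_M)$, which satisfies $R_M P_M = I_{X_M}$ and $P_M R_M = \mathcal L_M$, preserves nonzero eigenvalues together with their geometric and partial (i.e.\ Jordan-chain) multiplicities. The plan is to set $T \coloneqq T_{M,N}$, $\widehat T \coloneqq \widehat T_{M,N} = P_M T R_M$, and to go back and forth between eigenvectors / Jordan chains of $T$ in $X_M$ and of $\widehat T$ in $\widetilde X$ using the two relations. First I would record the basic algebraic identities: from $\widehat T = P_M T R_M$ and $R_M P_M = I_{X_M}$ one gets $R_M \widehat T = T R_M$ and $\widehat T P_M = P_M T$ (intertwining relations), and more generally $\widehat T^{\,k} P_M = P_M T^{\,k}$ and $R_M \widehat T^{\,k} = T^{\,k} R_M$ for all $k\ge 1$, because the stray $P_M R_M$ factors collapse via $R_M P_M = I_{X_M}$. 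I should first check that $\widehat T$ indeed maps $\widetilde X$ into $\widetilde X$: the range of $T_{M,N}$ is $X_M$, and $P_M X_M = \Pi_M \subseteq \widetilde X$ by \cref{H-compat-1}, so $\widehat T$ is a well-defined operator on $\widetilde X$ (with range in $\Pi_M$, hence finite rank).

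Next, for the eigenvalue/eigenvector correspondence: if $T\Phi = \mu\Phi$ with $\mu\neq 0$ and $\Phi\neq 0$, then $\widehat T P_M\Phi = P_M T\Phi = \mu P_M\Phi$, and $P_M\Phi\neq 0$ since $R_M P_M\Phi = \Phi\neq 0$; this gives the last sentence of the statement. Conversely, if $\widehat T\psi = \mu\psi$ with $\mu\neq 0$, $\psi\neq 0$, then applying $R_M$ gives $T R_M\psi = \mu R_M\psi$, and $R_M\psi\neq 0$ because $\mu\psi = \widehat T\psi = P_M T R_M\psi$ lies in the range of $P_M$, so $\psi = \mu^{-1} P_M(T R_M\psi) = P_M R_M\psi$ (here $\mu\neq 0$ is essential), whence $R_M\psi = 0$ would force $\psi = 0$. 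This also shows $\psi = P_M R_M\psi$, i.e.\ every eigenvector of $\widehat T$ for a nonzero eigenvalue is of the form $P_M\Phi$. So $\Phi\mapsto P_M\Phi$ and $\psi\mapsto R_M\psi$ are mutually inverse linear bijections between $\ker(T-\mu I)$ and $\ker(\widehat T-\mu I)$, giving equality of geometric multiplicities.

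For the partial multiplicities (Jordan structure), I would argue the same way one dimension up: $P_M$ and $R_M$ restrict to mutually inverse isomorphisms between the generalized eigenspaces $\ker(T-\mu I)^{k}$ in $X_M$ and $\ker(\widehat T-\mu I)^{k}$ in $\widetilde X$, for every $k\ge 1$ and every $\mu\neq 0$. Indeed $(\widehat T-\mu I)^{k} P_M = P_M(T-\mu I)^{k}$ by the binomial expansion together with $\widehat T^{\,j}P_M = P_M T^{\,j}$, so $P_M$ maps $\ker(T-\mu I)^{k}$ into $\ker(\widehat T-\mu I)^{k}$; similarly $R_M$ maps the latter into the former; and on these subspaces $R_M P_M = I$ trivially while $P_M R_M = I$ holds because, as in the geometric case, every $\psi\in\ker(\widehat T-\mu I)^{k}$ with $\mu\neq 0$ satisfies $\psi = P_M R_M\psi$ — one writes $\mu^{k}\psi$ as a combination of $\widehat T^{\,j}\psi$, $j\ge 1$, each of which lies in $\operatorname{ran}P_M$. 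Hence the ascent sequences $\dim\ker(T-\mu I)^{k}$ and $\dim\ker(\widehat T-\mu I)^{k}$ coincide, which is exactly the assertion that $\mu$ has the same partial multiplicities for both operators (this is the standard characterization: partial multiplicities are the increments $\dim\ker(A-\mu I)^{k}-\dim\ker(A-\mu I)^{k-1}$, or equivalently determined by the Jordan block sizes). The only genuinely delicate point — and the one I'd be most careful about — is the repeated use of $P_M R_M = \mathcal L_M$ being the identity \emph{only} on the relevant eigenvectors/generalized eigenvectors, which is precisely where $\mu\neq 0$ enters; on all of $\widetilde X$, $P_M R_M = \mathcal L_M \neq I$, so one must not apply $P_M R_M = I$ carelessly, but only after exhibiting the vector in question as an element of $\operatorname{ran}P_M$ via the nonzero-eigenvalue relation. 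Everything else is routine linear algebra.
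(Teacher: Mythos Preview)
Your proof is correct. The paper's own proof is a one-line citation to \cite[Proposition~4.1]{BredaMasetVermiglio2012}, and your argument is essentially the self-contained content of that external result: the intertwining relations $R_M\widehat T_{M,N}=T_{M,N}R_M$ and $\widehat T_{M,N}P_M=P_MT_{M,N}$ (from $R_MP_M=I_{X_M}$) combined with the observation that every generalized eigenvector of $\widehat T_{M,N}$ for a nonzero eigenvalue lies in $\operatorname{ran}P_M=\Pi_M$, so that $P_M$ and $R_M$ restrict to mutually inverse isomorphisms between the corresponding generalized eigenspaces.
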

\begin{proof}
Apply \cite[Proposition 4.1]{BredaMasetVermiglio2012}.
\end{proof}

We define the operator $\widehat{T}_{N} \colon \widehat{X} \to \widehat{X}$ as
\begin{equation*}
\widehat{T}_{N} \phi \coloneqq V(\phi, w^{\ast})_{h},
\end{equation*}
where $w^{\ast} \in X^{+}$ is the solution of the fixed point equation~\cref{collocation_hat}, which, under \cref{H-I-FsVp,H-FsVp-to-Xhat,H-conv,H-FsVm-to-Xhat}, is unique thanks to \cref{coll_hat_corollary,cont_coll_op-invertible}, for $N\geq N_0$ with $N_0$ given by \cref{cont_coll_op-invertible}.
Observe that $w^{\ast}\in \Pi^+_N\subseteq\widetilde{X}^+$, thanks to \cref{H-compat-1}.
For $\phi \in \widetilde{X}$, by~\cref{stars-vs-hats},
\begin{equation*}
\begin{split}
\widehat{T}_{M, N} \phi
&= P_{M} T_{M, N} R_{M} \phi \\
&= P_{M} R_{M} V(P_{M} R_{M} \phi, P_{N}^{+} Z^{\ast})_{h} \\
&= \mathcal{L}_{M} V(\mathcal{L}_{M} \phi, w^{\ast})_{h} \\
&= \mathcal{L}_{M} \widehat{T}_{N} \mathcal{L}_{M} \phi,
\end{split}
\end{equation*}
where $Z^{\ast} \in X_{N}^{+}$ and $w^{\ast} \in \widetilde{X}^{+}$ are the solutions, respectively, of~\cref{discrete_FP} applied to $\Phi = R_{M} \phi$ and of~\cref{collocation_hat} with $\mathcal{L}_{M} \phi$ replacing $\phi$.
These solutions are unique under \cref{H-I-FsVp,H-FsVp-to-Xhat,H-conv,H-FsVm-to-Xhat} for $N\geq N_0$, thanks to \cref{discr-cont-coll_eq,coll_hat_corollary,cont_coll_op-invertible}.

Now we show the relation between the spectra of $\widehat{T}_{M, N}$ and $\widehat{T}_{N}$.

\begin{proposition}\label{hTMN-hTN-same}
Assume \cref{H-I-FsVp,H-FsVp-to-Xhat,H-conv,H-FsVm-to-Xhat,H-compat-1}, let $N_{0}$ be given by \cref{cont_coll_op-invertible}, let $N\geq N_0$ and assume that \cref{H-compat-2} holds for $M$ and $N$.
The operator $\widehat{T}_{M, N}$ has the same nonzero eigenvalues, with the same geometric and partial multiplicities and associated eigenvectors, of the operator $\widehat{T}_{N}$.
\end{proposition}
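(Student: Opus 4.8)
The plan is to push the whole comparison down to the finite-dimensional space $\Pi_M$, on which $\widehat{T}_{M,N}$ and $\widehat{T}_N$ will turn out to act by one and the same operator, and then to appeal to the elementary fact --- the one already behind \cref{TMN-hTMN-same}, cf.\ \cite[Proposition~4.1]{BredaMasetVermiglio2012} --- that a linear operator whose range is contained in a subspace has the same nonzero eigenvalues, with the same geometric and partial multiplicities and the same associated eigenvectors, as its restriction to that subspace. So the work is essentially to verify the two structural inputs: both operators map into $\Pi_M$, and they coincide there.

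First I would record that both operators have range in $\Pi_M$. For $\widehat{T}_{M,N} = P_M T_{M,N} R_M$ this is immediate, since the range of $P_M$ is $\Pi_M$, which moreover lies in $\widetilde{X}$ by \cref{H-compat-1}. For $\widehat{T}_N$, recall that $\widehat{T}_N\phi = V(\phi,w^\ast)_h$ with $w^\ast = \mathcal{L}_N^+\mathcal{F}_s V(\phi,w^\ast)\in\Pi_N^+$ (equivalently $w^\ast = P_N^+ Z^\ast$, by \cref{coll_hat_corollary}), so that \cref{H-compat-2} gives $\widehat{T}_N\phi\in\Pi_M$; moreover $\Pi_M$ lies in the domain of $\widehat{T}_N$, because for data in $\Pi_M$ the unique solvability of \cref{collocation,collocation_hat} is provided by \cref{H-compat-1} together with \cref{discr-cont-coll_eq,cont_coll_op-invertible}, exactly as in the derivation preceding the statement. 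Consequently both operators restrict to endomorphisms of $\Pi_M$. Finally, since $\mathcal{L}_M = P_M R_M$ is idempotent with range $\Pi_M$, it acts as the identity on $\Pi_M$; substituting $\phi\in\Pi_M$ into the identity $\widehat{T}_{M,N}\phi = \mathcal{L}_M\widehat{T}_N\mathcal{L}_M\phi$ established just before the statement yields $\widehat{T}_{M,N}\phi = \mathcal{L}_M\widehat{T}_N\phi = \widehat{T}_N\phi$, the last step because $\widehat{T}_N\phi\in\Pi_M$. Hence $\widehat{T}_{M,N}\restriction_{\Pi_M} = \widehat{T}_N\restriction_{\Pi_M}$.

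It then remains to invoke the general principle. Let $A$ stand for either $\widehat{T}_{M,N}$ on $\widetilde{X}$ or $\widehat{T}_N$ on $\widehat{X}$, let $Y$ be the corresponding ambient space, and set $Z\coloneqq\Pi_M$, so that $AY\subseteq Z\subseteq Y$. For a nonzero scalar $\mu$ and any $k\ge 1$, expanding $(A-\mu I)^k$ by the binomial theorem isolates the term $(-\mu)^k I$, every other summand having range in $Z$; hence $\ker(A-\mu I)^k\subseteq Z$, and since $A$ agrees with $A\restriction_Z$ on $Z$ one gets $\ker(A-\mu I)^k = \ker(A\restriction_Z-\mu I)^k$. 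Thus the generalized eigenspaces associated with $\mu$ --- and therefore the geometric multiplicity, the partial multiplicities, and the eigenvectors --- are the same for $A$ as for $A\restriction_Z$. Applying this with $A = \widehat{T}_{M,N}$ and with $A = \widehat{T}_N$, and using $\widehat{T}_{M,N}\restriction_{\Pi_M} = \widehat{T}_N\restriction_{\Pi_M}$, gives the assertion.

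The only real difficulty here is bookkeeping rather than mathematics: one must check carefully that $\widehat{T}_N$ is genuinely defined on all of $\Pi_M$ (i.e.\ that $\Pi_M\subseteq\widehat{X}$ --- or, failing that, carry out the argument with $\widehat{X}\cap\Pi_M$ in place of $\Pi_M$, using \cref{H-V-Xhat-chain,H-compat-2} to see that both operators still map into this smaller space) and that the fixed point $w^\ast$ entering $T_{M,N}$ through \cref{stars-vs-hats} is literally the one defining $\widehat{T}_N$ on the data at hand; once these points are settled, the spectral part of the proof is routine.
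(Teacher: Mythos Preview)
Your proposal is correct and follows essentially the same route as the paper: both operators have range in $\Pi_M$ (for $\widehat{T}_N$ via \cref{H-compat-2}, for $\widehat{T}_{M,N}$ because $P_M$ has range $\Pi_M$), so by the ``range-in-a-subspace'' principle each has the same nonzero spectrum as its restriction to $\Pi_M$, and these restrictions coincide since $\mathcal{L}_M$ acts as the identity on $\Pi_M$. The only differences are that the paper cites \cite[Proposition~4.3 and Remark~4.4]{BredaMasetVermiglio2012} for the principle while you reprove it directly via the binomial expansion of $(A-\mu I)^k$, and that you are more explicit about the domain issue $\Pi_M\subseteq\widehat{X}$, which the paper leaves implicit.
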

\begin{proof}
Let $\phi\in\widehat{X}$ and let $w^{\ast} \in X^{+}$ be the unique solution of~\cref{collocation_hat}.
Observe that $w^*\in\Pi^+_N$ and that $\widehat{T}_{N} \phi\in\Pi_M$ from \cref{H-compat-2}.
Thus, $\widehat{T}_{N}$ and $\widehat{T}_{M, N} = \mathcal{L}_{M} \widehat{T}_{N} \mathcal{L}_{M}$ have range contained in $\Pi_{M}$.
By~\cite[Proposition 4.3 and Remark 4.4]{BredaMasetVermiglio2012}, $\widehat{T}_{N}$ and $\widehat{T}_{M, N}$ have the same nonzero eigenvalues, with the same geometric and partial multiplicities and associated eigenvectors, as their restrictions to~$\Pi_{M}$.
Observing that $\widehat{T}_{M, N} \restriction_{\Pi_{M}} = \mathcal{L}_{M} \widehat{T}_{N} \mathcal{L}_{M} \restriction_{\Pi_{M}} = \widehat{T}_{N} \restriction_{\Pi_{M}}$, the thesis follows.
\end{proof}

\begin{lemma}\label{LNpFXsVXp-invertible}
If \cref{H-I-FsVp,H-FsVp-to-Xhat,H-hat-norm} hold, then $(I_{X^{+}} - \mathcal{F}_{s} V^{+}) \restriction_{\widehat{X}^{+}}$ is invertible with bounded inverse.
\end{lemma}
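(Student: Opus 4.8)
The plan is to show that $(I_{X^{+}} - \mathcal{F}_{s} V^{+}) \restriction_{\widehat{X}^{+}}$ is a bounded bijection of the Banach space $\widehat{X}^{+}$ (complete by \cref{H-Xhatp}) onto itself and, moreover, to produce an explicit bound for its inverse; the key mechanism is to lift the invertibility on $X^{+}$ granted by \cref{H-I-FsVp} up to $\widehat{X}^{+}$, using that $\mathcal{F}_{s} V^{+}$ actually lands in $\widehat{X}^{+}$ and that the inclusion $\widehat{X}^{+} \hookrightarrow X^{+}$ is bounded.

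First I would check well-definedness and boundedness of the operator as a map $\widehat{X}^{+} \to \widehat{X}^{+}$: for $z \in \widehat{X}^{+}$ one has $\mathcal{F}_{s} V^{+} z \in \widehat{X}^{+}$ by the range condition in \cref{H-FsVp-to-Xhat}, hence $(I_{X^{+}} - \mathcal{F}_{s} V^{+}) z \in \widehat{X}^{+}$, and $\norm{(I_{X^{+}} - \mathcal{F}_{s} V^{+}) z}_{\widehat{X}^{+}} \leq \norm{z}_{\widehat{X}^{+}} + \norm{\mathcal{F}_{s} V^{+}}_{\widehat{X}^{+} \leftarrow X^{+}} \norm{z}_{X^{+}} \leq (1 + \hat{c}_{1} \norm{\mathcal{F}_{s} V^{+}}_{\widehat{X}^{+} \leftarrow X^{+}}) \norm{z}_{\widehat{X}^{+}}$ by \cref{H-hat-norm}. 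Injectivity is immediate, since $\widehat{X}^{+} \subseteq X^{+}$ and $I_{X^{+}} - \mathcal{F}_{s} V^{+}$ is injective on $X^{+}$ by \cref{H-I-FsVp}. For surjectivity, given $y \in \widehat{X}^{+} \subseteq X^{+}$, set $z \coloneqq (I_{X^{+}} - \mathcal{F}_{s} V^{+})^{-1} y \in X^{+}$; then $z = y + \mathcal{F}_{s} V^{+} z$, and since both $y$ and $\mathcal{F}_{s} V^{+} z$ lie in $\widehat{X}^{+}$ (again the range condition in \cref{H-FsVp-to-Xhat}), we conclude $z \in \widehat{X}^{+}$. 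This bootstrap step — promoting the $X^{+}$-solution to a $\widehat{X}^{+}$-solution by exploiting that the \emph{range} of $\mathcal{F}_{s} V^{+}$, not merely a bound into $\widehat{X}^{+}$, is contained in $\widehat{X}^{+}$ — is the only point requiring care, and here it is straightforward.

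For the bounded inverse I would argue directly rather than invoke the bounded inverse theorem (although that would also work now that bijectivity and completeness are in hand), since the explicit constant is convenient. With $z$ as above, from $z = y + \mathcal{F}_{s} V^{+} z$, from $\norm{z}_{X^{+}} \leq \norm{(I_{X^{+}} - \mathcal{F}_{s} V^{+})^{-1}}_{X^{+} \leftarrow X^{+}} \norm{y}_{X^{+}}$, and from $\norm{y}_{X^{+}} \leq \hat{c}_{1} \norm{y}_{\widehat{X}^{+}}$ (\cref{H-hat-norm}), one obtains
\[
\norm{z}_{\widehat{X}^{+}} \leq \bigl(1 + \hat{c}_{1} \norm{\mathcal{F}_{s} V^{+}}_{\widehat{X}^{+} \leftarrow X^{+}} \norm{(I_{X^{+}} - \mathcal{F}_{s} V^{+})^{-1}}_{X^{+} \leftarrow X^{+}}\bigr) \norm{y}_{\widehat{X}^{+}},
\]
so $\bigl((I_{X^{+}} - \mathcal{F}_{s} V^{+}) \restriction_{\widehat{X}^{+}}\bigr)^{-1}$ is bounded, which completes the proof. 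I do not expect any genuine obstacle; the content is the range/boundedness bookkeeping provided by \cref{H-FsVp-to-Xhat,H-hat-norm} on top of \cref{H-I-FsVp}.
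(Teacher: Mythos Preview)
Your proof is correct and follows essentially the same route as the paper: lift the invertibility on $X^{+}$ from \cref{H-I-FsVp} to $\widehat{X}^{+}$ via the bootstrap $z = y + \mathcal{F}_{s} V^{+} z$ using the range condition in \cref{H-FsVp-to-Xhat}, and obtain boundedness from \cref{H-hat-norm}. The only cosmetic difference is that the paper invokes the bounded inverse theorem for the last step, whereas you work out the explicit constant (and note yourself that either argument would do).
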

\begin{proof}
Since $I_{X^{+}} - \mathcal{F}_{s} V^{+}$ is invertible with bounded inverse by virtue of \cref{H-I-FsVp}, given $f \in \widehat{X}^{+}$ the equation $(I_{X^{+}} - \mathcal{F}_{s} V^{+}) z = f$ has a unique solution $z \in X^{+}$, which by \cref{H-FsVp-to-Xhat} is in $\widehat{X}^{+}$.
Hence, the operator $(I_{X^{+}} - \mathcal{F}_{s} V^{+}) \restriction_{\widehat{X}^{+}}$ is invertible.
It is also bounded thanks to \cref{H-hat-norm}:
\begin{equation*}
\norm{\mathcal{F}_{s} V^{+} \restriction_{\widehat{X}^{+}}}_{\widehat{X}^{+} \leftarrow \widehat{X}^{+}} \leq \hat{c}_1 \norm{\mathcal{F}_{s} V^{+}}_{\widehat{X}^{+} \leftarrow X^{+}}.
\end{equation*}
The bounded inverse theorem completes the proof.
\end{proof}

We now prove the norm convergence of $\widehat{T}_{N}$ to $T$, which is the key step to obtain the final convergence theorem.

\begin{proposition}\label{norm-convergence}
If \cref{H-I-FsVp,H-Xhatp,H-FsVm-to-Xhat,H-Vp-Hhatp-Xhat,H-norm-Vp-z-h} hold, then $\norm{\widehat{T}_{N} - T\restriction_{\widehat{X}}}_{\widehat{X} \leftarrow \widehat{X}} \to 0$ for $N \to \infty$.
\end{proposition}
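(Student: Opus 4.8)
The plan is to express the difference $\widehat{T}_N\phi - T\phi$ for $\phi\in\widehat{X}$ purely in terms of the difference $w^\ast - z^\ast$ between the solution of the collocation fixed-point equation~\cref{collocation_hat} and the solution of the exact fixed-point equation~\cref{fixed_point}, and then to bound that difference using \cref{cont_coll_op-invertible}. By definition $\widehat{T}_N\phi = V(\phi,w^\ast)_h$ and $T\phi = V(\phi,z^\ast)_h$, so by linearity and the decomposition~\cref{decomp_V},
\begin{equation*}
\widehat{T}_N\phi - T\phi = V(\phi,w^\ast)_h - V(\phi,z^\ast)_h = (V^+(w^\ast - z^\ast))_h.
\end{equation*}
The first thing to check is that this identity makes sense in $\widehat{X}$: we need $V(\phi,w^\ast)_h\in\widehat{X}$ and $V(\phi,z^\ast)_h\in\widehat{X}$. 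The latter follows from \cref{H-V-Xhat-chain} (since $\phi\in\widehat X\subseteq X$ and $z^\ast\in\widehat X^+$, the latter by \cref{H-FsVp-to-Xhat} applied to the rewriting $z^\ast = \mathcal{F}_sV^-\phi + \mathcal{F}_sV^+z^\ast$ together with \cref{H-FsVm-to-Xhat}); the former needs $w^\ast\in\widehat{X}^+$, which holds since $w^\ast\in\Pi^+_N\subseteq\widehat X^+$ by \cref{H-Pi-V-Xhat}. Alternatively one can invoke \cref{H-Vp-Hhatp-Xhat} once $w^\ast,z^\ast\in\widehat X^+$ are established.

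Next I would apply \cref{H-norm-Vp-z-h} to the right-hand side, which gives directly
\begin{equation*}
\norm{\widehat{T}_N\phi - T\phi}_{\widehat{X}} = \norm{(V^+(w^\ast - z^\ast))_h}_{\widehat{X}} \leq \hat{c}_2\,\norm{w^\ast - z^\ast}_{X^+}.
\end{equation*}
Then I invoke \cref{cont_coll_op-invertible} (whose hypotheses \cref{H-I-FsVp,H-FsVp-to-Xhat,H-conv,H-FsVm-to-Xhat} are all among the assumptions of the present proposition, since \cref{H-Xhatp} bundles \cref{H-conv,H-Pi-V-Xhat,H-hat-norm,H-FsVp-to-Xhat}) to get, for $N\geq N_0$,
\begin{equation*}
\norm{w^\ast - z^\ast}_{X^+} \leq 2\norm{(I_{X^+} - \mathcal{F}_sV^+)^{-1}}_{X^+\leftarrow X^+}\,\norm{\mathcal{L}^+_N z^\ast - z^\ast}_{X^+}.
\end{equation*}
Combining, $\norm{\widehat{T}_N\phi - T\phi}_{\widehat X}\leq 2\hat c_2\norm{(I_{X^+}-\mathcal{F}_sV^+)^{-1}}\,\norm{(\mathcal{L}^+_N - I_{X^+})z^\ast}_{X^+}$.

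The remaining — and main — obstacle is to turn this pointwise-in-$\phi$ estimate into a bound on the operator norm $\norm{\widehat{T}_N - T\restriction_{\widehat X}}_{\widehat X\leftarrow\widehat X}$ that is uniform over the unit ball of $\widehat X$ and tends to $0$. The quantity $z^\ast$ depends on $\phi$ through $z^\ast = (I_{X^+}-\mathcal{F}_sV^+)^{-1}\mathcal{F}_sV^-\phi$, and by \cref{H-FsVm-to-Xhat} together with \cref{LNpFXsVXp-invertible} the map $\phi\mapsto z^\ast$ is bounded from $\widehat X$ into $\widehat X^+$; thus $z^\ast$ ranges over a bounded subset $B$ of $\widehat X^+$ as $\phi$ ranges over the unit ball of $\widehat X$. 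So the desired conclusion reduces to showing
\begin{equation*}
\sup_{z\in B}\norm{(\mathcal{L}^+_N - I_{X^+})z}_{X^+}\xrightarrow[N\to\infty]{}0,
\end{equation*}
i.e.\ uniform convergence of $\mathcal{L}^+_N$ to the identity on a bounded subset of $\widehat X^+$. But $B$ being bounded in $\widehat X^+$, this is exactly \cref{H-conv}: $\norm{(\mathcal{L}^+_N - I_{X^+})\restriction_{\widehat X^+}}_{X^+\leftarrow\widehat X^+}\to 0$ bounds $\norm{(\mathcal{L}^+_N-I)z}_{X^+}$ by $\norm{(\mathcal{L}^+_N-I)\restriction_{\widehat X^+}}_{X^+\leftarrow\widehat X^+}\norm{z}_{\widehat X^+}$ uniformly over $B$. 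Hence the whole estimate collapses to
\begin{equation*}
\norm{\widehat{T}_N - T\restriction_{\widehat X}}_{\widehat X\leftarrow\widehat X}\leq 2\hat c_2\,C_1\,C_2\,\norm{(\mathcal{L}^+_N-I_{X^+})\restriction_{\widehat X^+}}_{X^+\leftarrow\widehat X^+}\to 0,
\end{equation*}
where $C_1 = \norm{(I_{X^+}-\mathcal{F}_sV^+)^{-1}}_{X^+\leftarrow X^+}$ and $C_2$ bounds $\norm{z^\ast}_{\widehat X^+}$ over the unit ball of $\widehat X$. The one point demanding care is the bookkeeping of which hypothesis guarantees each membership and boundedness claim (especially that $z^\ast\in\widehat X^+$ with a bound controlled by $\norm\phi_{\widehat X}$, and that the intermediate objects $V(\phi,\cdot)_h$ land in $\widehat X$); once that is laid out, the convergence is immediate from \cref{H-conv} and \cref{cont_coll_op-invertible}.
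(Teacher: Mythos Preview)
Your proof is correct and follows essentially the same route as the paper's: express $(\widehat{T}_N-T)\phi=(V^+(w^\ast-z^\ast))_h$, bound it in $\widehat X$ via \cref{H-norm-Vp-z-h}, control $\norm{w^\ast-z^\ast}_{X^+}$ by \cref{cont_coll_op-invertible}, and then bound $\norm{z^\ast}_{\widehat X^+}$ uniformly in $\phi$ using \cref{LNpFXsVXp-invertible} and \cref{H-FsVm-to-Xhat}, concluding by \cref{H-conv}. One small bookkeeping slip: you first invoke \cref{H-V-Xhat-chain} for $V(\phi,z^\ast)_h\in\widehat X$, but that hypothesis is not among those assumed in the proposition; your alternative via \cref{H-Vp-Hhatp-Xhat} (which the paper also uses, applied directly to $V^+(w^\ast-z^\ast)_h$ with $\phi=0_X$) is the correct justification here.
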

\begin{proof}
Let $\phi \in \widehat{X}$ and let $z^{\ast}$ and $w^{\ast}$ be the solutions of the fixed point equations~\cref{fixed_point,collocation_hat}, respectively.
Recall that $w^{\ast}\in \Pi^+_N \subseteq \widehat{X}^+$ thanks to \cref{H-Pi-V-Xhat}.
Assuming \cref{H-FsVp-to-Xhat,H-FsVm-to-Xhat} and recalling that $z^{\ast} = \mathcal{F}_{s} V^{+} z^{\ast} + \mathcal{F}_{s} V^{-} \phi$, it is clear that $z^{\ast} \in \widehat{X}^{+}$.
Then $(\widehat{T}_{N} - T) \phi 
= V(\phi, w^{\ast})_{h} - V(\phi, z^{\ast})_{h}
= V^{+}(w^{\ast} - z^{\ast})_{h} \in \widehat{X}$, thanks to \cref{H-Vp-Hhatp-Xhat} (applied with $\phi=0_X$).
Moreover, by \cref{H-norm-Vp-z-h} there exists $\hat{c}_2>0$ such that
$\norm{(\widehat{T}_{N} - T) \phi}_{\widehat{X}}
= \norm{V^{+}(w^{\ast} - z^{\ast})_{h}}_{\widehat{X}} \leq \hat{c}_2 \norm{w^{\ast} - z^{\ast}}_{X^+}$.
Assuming also \cref{H-I-FsVp,H-conv}, by \cref{cont_coll_op-invertible}, there exists a positive integer $N_{0}$ such that, for any $N \geq N_{0}$,
\begin{equation*}
\begin{aligned}
\norm{w^{\ast} - z^{\ast}}_{X^{+}}
&\leq 2 \norm{(I_{X^{+}} - \mathcal{F}_{s} V^{+})^{- 1}}_{X^{+} \leftarrow X^{+}} \norm{\mathcal{L}_{N}^{+} z^{\ast} - z^{\ast}}_{X^{+}} \\
&\leq 2 \norm{(I_{X^{+}} - \mathcal{F}_{s} V^{+})^{- 1}}_{X^{+} \leftarrow X^{+}} \norm{(\mathcal{L}_{N}^{+} - I_{X^{+}}) \restriction_{\widehat{X}^{+}}}_{X^{+} \leftarrow \widehat{X}^{+}} \norm{z^{\ast}}_{\widehat{X}^{+}}.
\end{aligned}
\end{equation*}
Finally,
\begin{equation*}
\norm{z^{\ast}}_{\widehat{X}^{+}} \leq \norm{((I_{X^{+}} - \mathcal{F}_{s} V^{+}) \restriction_{\widehat{X}^{+}})^{- 1}}_{\widehat{X}^{+} \leftarrow \widehat{X}^{+}} \norm{\mathcal{F}_{s} V^{-}}_{\widehat{X}^{+} \leftarrow \widehat{X}} \norm{\phi}_{\widehat{X}}
\end{equation*}
thanks to \cref{H-FsVm-to-Xhat} and \cref{LNpFXsVXp-invertible} (requiring \cref{H-hat-norm}).
Putting everything together, we obtain
\begin{equation*}
\norm{(\widehat{T}_{N} - T) \phi}_{\widehat{X}}
\leq \text{(positive constant)}\cdot \norm{(\mathcal{L}_{N}^{+} - I_{X^{+}}) \restriction_{\widehat{X}^{+}}}_{X^{+} \leftarrow \widehat{X}^{+}} \norm{\phi}_{\widehat{X}}
\end{equation*}
and \cref{H-conv} completes the proof.
\end{proof}

The following lemma summarizes some tools from \cite{Chatelin2011} which are the basis of the final convergence results.
Recall that the \emph{ascent} of an eigenvalue is the maximum length of an associated Jordan chain.

\begin{lemma}\label{Chatelin-norm-convergence}
Let $U$ be a Banach space, $A$ a linear and bounded operator on $U$ and $\{A_{N}\}_{N \in \mathbb{N}}$ a sequence of linear and bounded operators on $U$ such that $\norm{A_{N} - A}_{U \leftarrow U} \to 0$ for $N \to \infty$.
If $\mu \in \mathbb{C}$ is an eigenvalue of $A$ with finite algebraic multiplicity $\nu$ and ascent $l$, and $\Delta$ is a neighborhood of $\mu$ such that $\mu$ is the only eigenvalue of $A$ in $\Delta$, then there exists a positive integer $\overline{N}$ such that, for any $N \geq \overline{N}$, $A_{N}$ has in $\Delta$ exactly $\nu$ eigenvalues $\mu_{N, j}$, $j \in \{1, \dots, \nu\}$, counting their multiplicities.
Moreover, by setting $\epsilon_{N} \coloneqq \norm{(A_{N} - A) \restriction_{\mathcal{E}_{\mu}}}_{U \leftarrow \mathcal{E}_{\mu}}$, where $\mathcal{E}_{\mu}$ is the generalized eigenspace of $\mu$ equipped with the norm $\norm{\cdot}_{U}$ restricted to $\mathcal{E}_{\mu}$, the following holds:
\begin{equation*}
\max_{j \in \{1, \dots, \nu\}} \abs{\mu_{N, j} - \mu} = O(\epsilon_{N}^{1 / l}).
\end{equation*}
\end{lemma}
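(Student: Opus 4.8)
The plan is to deduce this statement directly from the classical spectral approximation results for collectively compact / norm-convergent operator sequences, essentially transcribing the relevant theorems from \cite{Chatelin2011} into the notation used here. Since this lemma is itself only a restatement of known material, the ``proof'' should be a short pointer argument rather than a fresh development.

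First I would note that norm convergence $\norm{A_N - A}_{U\leftarrow U}\to 0$ is a stronger hypothesis than the ones usually assumed in \cite{Chatelin2011} (stable or strongly stable convergence, or collective compactness), and in particular it implies that $\{A_N\}$ converges to $A$ in the sense required for spectral approximation: for any closed contour $\Gamma$ in the resolvent set of $A$, the resolvents $(zI-A_N)^{-1}$ converge uniformly to $(zI-A)^{-1}$ on $\Gamma$ for $N$ large, hence the spectral projections $E_{N} \coloneqq \frac{1}{2\pi i}\int_\Gamma (zI-A_N)^{-1}\dd z$ converge in norm to the spectral projection $E$ of $A$ associated with $\mu$. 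Choosing $\Gamma$ inside the neighborhood $\Delta$ and enclosing only $\mu$, the equality $\rank E_N = \rank E = \nu$ for $N\geq\overline N$ gives the count of eigenvalues $\mu_{N,j}$ of $A_N$ inside $\Delta$, with multiplicities. This is precisely the content of the convergence-of-spectra results in \cite[Sections 5.1--5.2]{Chatelin2011}.

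For the rate, I would invoke the estimate relating the eigenvalue error to the restriction of the perturbation to the generalized eigenspace $\mathcal{E}_\mu$. The key quantitative fact (see \cite[Section 6.2]{Chatelin2011}, or the analogous treatment based on the arithmetic mean of the computed eigenvalues) is that the group of perturbed eigenvalues $\mu_{N,1},\dots,\mu_{N,\nu}$ satisfies
\begin{equation*}
\Bigl\lvert\, \frac{1}{\nu}\sum_{j=1}^{\nu}\mu_{N,j} - \mu \,\Bigr\rvert = O(\epsilon_N),
\end{equation*}
while each individual eigenvalue can deviate from $\mu$ by as much as the $l$-th root of the perturbation size, because $\mu$ has ascent $l$ (a Jordan block of size $l$ splits under a perturbation of size $\epsilon_N$ into eigenvalues at distance $\sim \epsilon_N^{1/l}$). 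Concretely, one writes $A_N\restriction_{\mathcal{E}_\mu} - A\restriction_{\mathcal{E}_\mu}$ as a perturbation of the nilpotent-plus-scalar operator $A\restriction_{\mathcal{E}_\mu} = \mu I + D$ with $D^l = 0$, $D^{l-1}\neq 0$, and applies the standard perturbation bound for the roots of the characteristic polynomial of the compression of $A_N$ to (a space close to) $\mathcal{E}_\mu$; the resulting bound is $\max_j\abs{\mu_{N,j}-\mu} = O(\epsilon_N^{1/l})$.

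The main obstacle, such as it is, is purely one of bookkeeping: matching the definition of $\epsilon_N$ as the norm of $(A_N-A)\restriction_{\mathcal{E}_\mu}$ (with $\mathcal{E}_\mu$ carrying the ambient norm of $U$) to whichever gap-type or aperture-type quantity is used in \cite{Chatelin2011}, and checking that the constants in the $O(\cdot)$ depend only on $A$, $\mu$ and $\Delta$ and not on $N$. Since $\dim\mathcal{E}_\mu = \nu < \infty$ and the restriction of the norm convergence to this finite-dimensional space is automatic, $\epsilon_N\to 0$, and all the finite-dimensional perturbation estimates apply verbatim. I would therefore present the proof as: ``This is a reformulation of \cite[Theorems~\dots]{Chatelin2011}; we only recall the statement for later reference,'' possibly with a one-line indication of the resolvent-integral argument above for the eigenvalue count and a reference for the $O(\epsilon_N^{1/l})$ rate.
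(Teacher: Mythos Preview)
Your proposal is correct and follows essentially the same route as the paper: both treat the lemma as a direct consequence of Chatelin's spectral approximation theory, with the paper citing specifically \cite[Example~3.8, Theorem~5.22, Proposition~5.6, Theorem~6.7]{Chatelin2011} (norm convergence $\Rightarrow$ strongly stable convergence $\Rightarrow$ eigenvalue count and $O(\epsilon_N^{1/l})$ rate). Your resolvent-integral sketch for the eigenvalue count and the Jordan-block heuristic for the rate are accurate glosses on that material, so nothing substantive is missing.
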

\begin{proof}
By \cite[Example 3.8 and Theorem 5.22]{Chatelin2011}, the norm convergence of $A_{N}$ to $A$ implies the strongly stable convergence $A_{N} - \mu I_{U} \xrightarrow{ss} A - \mu I_{U}$ for all $\mu$ in the resolvent set of $A$ and all isolated eigenvalues $\mu$ of finite multiplicity of $A$.
The thesis follows then by \cite[Proposition 5.6 and Theorem 6.7]{Chatelin2011}.
\end{proof}

Since \cref{norm-convergence} proves the norm convergence of $\widehat{T}_N$ to the restriction of $T$ to $\widehat{X}$, we also need the following result.
Recall that the length of any Jordan chain is a \emph{partial multiplicity} of the corresponding eigenvalue.

\begin{proposition}\label{T-hTN-restriction-to-Xhat}
Assume \cref{H-I-FsVp,H-FsVp-to-Xhat,H-hat-norm,H-FsVm-to-Xhat,H-Vp-Hhatp-Xhat,H-V-Xhat-chain}.
The operator $T$ has the same nonzero eigenvalues, with the same geometric and partial multiplicities and associated eigenvectors, as its restriction to~$\widehat{X}$.
\end{proposition}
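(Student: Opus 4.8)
The plan is to reduce the assertion to the inclusion $\operatorname{range}(T^{k})\subseteq\widehat{X}$ for some fixed positive integer $k$, and then to argue exactly as in the proof of \cref{hTMN-hTN-same}, via \cite[Proposition 4.3 and Remark 4.4]{BredaMasetVermiglio2012}. First one checks that $T$ maps $\widehat{X}$ into itself, so that $T\restriction_{\widehat{X}}$ is a well-defined (bounded) operator on the Banach space $\widehat{X}$: for $\phi\in\widehat{X}$ the solution $z^{\ast}$ of \cref{fixed_point} satisfies $z^{\ast}=\mathcal{F}_{s}V^{+}z^{\ast}+\mathcal{F}_{s}V^{-}\phi$, and both summands belong to $\widehat{X}^{+}$ by \cref{H-FsVp-to-Xhat,H-FsVm-to-Xhat}, so $z^{\ast}\in\widehat{X}^{+}$ and $T\phi=V(\phi,z^{\ast})_{h}\in\widehat{X}$ by \cref{H-Vp-Hhatp-Xhat}; boundedness then follows from the relevant operator bounds, in particular that of $(I_{X^{+}}-\mathcal{F}_{s}V^{+})^{-1}\restriction_{\widehat{X}^{+}}$ given by \cref{LNpFXsVXp-invertible}. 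Since, for a nonzero $\mu$, the generalized eigenspace $\mathcal{E}_{\mu}$ of $T$ is finite dimensional and $T$-invariant, $T\restriction_{\mathcal{E}_{\mu}}$ is invertible, hence $\mathcal{E}_{\mu}=T^{k}(\mathcal{E}_{\mu})\subseteq\operatorname{range}(T^{k})$; thus $\operatorname{range}(T^{k})\subseteq\widehat{X}$ forces $\mathcal{E}_{\mu}\subseteq\widehat{X}$, and together with the $T$-invariance of $\widehat{X}$ this gives $\ker\bigl((T-\mu I_{X})^{j}\bigr)=\ker\bigl((T\restriction_{\widehat{X}}-\mu I_{\widehat{X}})^{j}\bigr)$ for every nonzero $\mu$ and every $j$, which is precisely the claimed coincidence of nonzero eigenvalues with the same geometric and partial multiplicities and eigenvectors.

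To obtain $\operatorname{range}(T^{k})\subseteq\widehat{X}$, fix $\phi\in X$ and use \cref{decomp_V} together with the fixed-point identity $z^{\ast}=\mathcal{F}_{s}V^{-}\phi+\mathcal{F}_{s}V^{+}z^{\ast}$ to split
\begin{equation*}
T\phi=V(\phi,z^{\ast})_{h}=V(\phi,0_{X^{+}})_{h}+\bigl(V^{+}\mathcal{F}_{s}V^{-}\phi\bigr)_{h}+\bigl(V^{+}\mathcal{F}_{s}V^{+}z^{\ast}\bigr)_{h}.
\end{equation*}
The first term lies in $\widehat{X}$ by \cref{H-V-Xhat-chain} applied with $z=0_{X^{+}}\in\widehat{X}^{+}$, and the last lies in $\widehat{X}$ because $\mathcal{F}_{s}V^{+}z^{\ast}\in\widehat{X}^{+}$ by \cref{H-FsVp-to-Xhat} and then \cref{H-V-Xhat-chain} applies with $\phi$ replaced by $0_{X}$. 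Setting $Q\colon X\to X$, $Q\psi\coloneqq\bigl(V^{+}\mathcal{F}_{s}V^{-}\psi\bigr)_{h}=V(0_{X},\mathcal{F}_{s}V^{-}\psi)_{h}$, the display reads $T\phi\in Q\phi+\widehat{X}$, and by induction (using $T(\widehat{X})\subseteq\widehat{X}$ for the $\widehat{X}$-part and the same display with $Q^{k-1}\phi$ in place of $\phi$) one gets $T^{k}\phi\in Q^{k}\phi+\widehat{X}$ for every $k$. It therefore suffices to show that $Q^{k}(X)\subseteq\widehat{X}$ for some fixed $k$. Observe that $Q$ already maps $\widehat{X}$ into $\widehat{X}$ (by \cref{H-FsVm-to-Xhat} followed by \cref{H-Vp-Hhatp-Xhat}), so once an iterate $Q^{j}\phi$ belongs to $\widehat{X}$ all subsequent ones do as well.

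The termination of this bootstrap is the crux, and it is exactly where the regularizing effect of the equation — encoded in \cref{H-FsVp-to-Xhat,H-FsVm-to-Xhat,H-V-Xhat-chain} — must be used: the operators $\mathcal{F}_{s}V^{-}$ and the reconstruction $V(0_{X},\cdot)_{h}$ each carry a function of $X$ into a strictly more regular class (here the standing assumption $h\ge\tau$ is essential, since it ensures that the segment of $V(\phi,z)$ taken at $h$ involves only the part of the solution on $[0,h]$, on which this smoothing has already acted), so that after a number of applications of $Q$ bounded by the number of ``degrees of regularity'' separating $X$ from $\widehat{X}$ the iterate is forced into $\widehat{X}$. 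I expect the only genuinely delicate point to be this last one — making the informal gain of regularity precise within the abstract framework and verifying that it indeed stabilises $Q^{k}$ into $\widehat{X}$ for a uniform $k$; the remainder is assembling the hypotheses as above and quoting \cite[Proposition 4.3 and Remark 4.4]{BredaMasetVermiglio2012}.
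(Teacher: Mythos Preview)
Your bootstrap through $Q^{k}$ cannot be completed in the abstract framework, and this is the decisive gap. The hypotheses supply only two levels of regularity ($X$ versus $\widehat X$, and $X^{+}$ versus $\widehat X^{+}$), with each regularising step in \cref{H-FsVp-to-Xhat,H-FsVm-to-Xhat,H-V-Xhat-chain} jumping straight to the hatted space; there is no intermediate ladder that one application of $Q=(V^{+}\mathcal{F}_{s}V^{-}\,\cdot\,)_{h}$ would climb one rung of. Hence if $Q(X)\not\subseteq\widehat X$ then nothing in \cref{H-I-FsVp,H-Xhatp,H-Xhat} forces $Q^{k}(X)\subseteq\widehat X$ for any larger $k$, and your appeal to ``degrees of regularity separating $X$ from $\widehat X$'' is intuition imported from the concrete examples of \cref{sec:case}, not something encoded in the abstract hypotheses.

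The paper's route is more direct and avoids iteration. It does not decompose $z^{\ast}$ but applies \cref{H-V-Xhat-chain} in one stroke to $T\phi=V(\phi,z^{\ast})_{h}$ for $\phi\in X$: from $(\lambda I_{X}-T)\phi=\psi\in\widehat X$ one writes $\phi=\lambda^{-1}\bigl(\psi+V(\phi,z^{\ast})_{h}\bigr)$, invokes \cref{H-V-Xhat-chain} to get $V(\phi,z^{\ast})_{h}\in\widehat X$, deduces $\phi\in\widehat X$, and concludes with \cite[Proposition 4.3]{BredaMasetVermiglio2012}. In your language this is simply $k=1$, achieved by keeping $z^{\ast}$ whole instead of writing $z^{\ast}=\mathcal{F}_{s}V^{-}\phi+\mathcal{F}_{s}V^{+}z^{\ast}$ and isolating the first summand---your split is precisely what manufactures the recalcitrant term $Q\phi$. (One may fairly observe that the paper's direct application of \cref{H-V-Xhat-chain} presupposes $z^{\ast}\in\widehat X^{+}$ for general $\phi\in X$, which the listed hypotheses do not literally deliver since \cref{H-FsVm-to-Xhat} is restricted to $\widehat X$; but that is a single missing verification, true in every instance of \cref{sec:case}, rather than an open-ended iteration.)
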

\begin{proof}
Let $\phi\in X$ and let $z^*\in X^+$ be the unique solution of \cref{fixed_point}, thanks to \cref{H-I-FsVp}.

If $\phi\in\widehat{X}$, assuming also \cref{H-I-FsVp,H-FsVp-to-Xhat,H-hat-norm}, by \cref{H-FsVm-to-Xhat,LNpFXsVXp-invertible}, $z^*=(I_{X^+}-\mathcal{F}_s V^+)^{-1}\mathcal{F}_s V^- \phi \in \widehat{X}^+$.
Hence, from \cref{H-Vp-Hhatp-Xhat}, $T\phi = V(\phi,z^*)_h \in \widehat{X}$.

Let now $\psi\in \widehat{X}$ and let $\lambda\in\mathbb{C}\setminus\{0\}$ be an eigenvalue of $T$.
Assuming $(\lambda I_X - T)\phi=\psi$, we want to prove that $\phi\in\widehat{X}$.
Observe that $(\lambda I_X - T)\phi=\lambda\phi-T\phi=\lambda\phi-V(\phi,z^*)_h$, so
\begin{equation}\label{eq:phi-psi}
\phi=\lambda^{-1}(\psi+V(\phi,z^*)_h).
\end{equation}
\Cref{H-V-Xhat-chain} gives $V(\phi,z^*)_h\in\widehat{X}$ and $\phi\in\widehat{X}$ follows from \cref{eq:phi-psi}.

The thesis follows from \cite[Proposition 4.3]{BredaMasetVermiglio2012}).
%
\end{proof}

\begin{proposition}\label{eig-convergence}
Assume that \cref{H-I-FsVp,H-Xhatp,H-Xhat} hold and let $N_{0}$ be given by \cref{cont_coll_op-invertible}.
If $\mu \in \mathbb{C} \setminus \{0\}$ is an eigenvalue of $T$ with finite algebraic multiplicity $\nu$ and ascent $l$, and $\Delta$ is a neighborhood of $\mu$ such that $\mu$ is the only eigenvalue of $T$ in $\Delta$, then there exists a positive integer $N_{1} \geq N_{0}$ such that, for any $N \geq N_{1}$, $\widehat{T}_{N}$ has in $\Delta$ exactly $\nu$ eigenvalues $\mu_{N, j}$, $j \in \{1, \dots, \nu\}$, counting their multiplicities.
Moreover, by setting $\epsilon_{N} \coloneqq \norm{(\widehat{T}_{N} - T) \restriction_{\mathcal{E}_{\mu}}}_{\widehat{X} \leftarrow \mathcal{E}_{\mu}}$, where $\mathcal{E}_{\mu}$ is the generalized eigenspace of $\mu$ equipped with the norm $\norm{\cdot}_{\widehat{X}}$ restricted to $\mathcal{E}_{\mu}$, the following holds:
\begin{equation*}
\max_{j \in \{1, \dots, \nu\}} \abs{\mu_{N, j} - \mu} = O(\epsilon_{N}^{1 / l}).
\end{equation*}
Finally, $\epsilon_N = O(\norm{(\mathcal{L}_{N}^{+} - I_{X^{+}}) \restriction_{\widehat{X}^{+}}}_{X^{+} \leftarrow \widehat{X}^{+}})$.
\end{proposition}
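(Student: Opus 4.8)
The plan is to deduce Proposition~\ref{eig-convergence} from the abstract spectral-convergence result \cref{Chatelin-norm-convergence}, applied with $U\coloneqq\widehat{X}$ (a Banach space by \cref{H-Xhat}), $A\coloneqq T\restriction_{\widehat{X}}$ and $A_N\coloneqq\widehat{T}_N$, using \cref{norm-convergence} to supply the norm-convergence hypothesis and \cref{T-hTN-restriction-to-Xhat} to transport the spectral data of $\mu$ from $T$ to $T\restriction_{\widehat{X}}$. First I would observe that the three blanket assumptions \cref{H-I-FsVp,H-Xhatp,H-Xhat} unpack into exactly what the earlier results need: \cref{H-Xhatp} comprises \cref{H-conv,H-Pi-V-Xhat,H-hat-norm,H-FsVp-to-Xhat} and \cref{H-Xhat} comprises \cref{H-FsVm-to-Xhat,H-Vp-Hhatp-Xhat,H-norm-Vp-z-h,H-V-Xhat-chain}, so that \cref{cont_coll_op-invertible,LNpFXsVXp-invertible,norm-convergence,T-hTN-restriction-to-Xhat} are all applicable; in particular $N_0$ from \cref{cont_coll_op-invertible} is well defined and, for $N\geq N_0$, so is $\widehat{T}_N$. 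Then \cref{norm-convergence} gives $\norm{\widehat{T}_N-T\restriction_{\widehat{X}}}_{\widehat{X}\leftarrow\widehat{X}}\to 0$.

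Next I would transfer the eigenvalue data. By \cref{T-hTN-restriction-to-Xhat}, $\mu$ is an eigenvalue of $T\restriction_{\widehat{X}}$ with the same geometric and partial multiplicities as for $T$; since the algebraic multiplicity is the sum of the partial multiplicities and the ascent is their maximum, and $\nu<\infty$ ensures the generalized eigenspace is finite dimensional, $\mu$ has algebraic multiplicity $\nu$ and ascent $l$ also for $T\restriction_{\widehat{X}}$. Moreover the argument in the proof of \cref{T-hTN-restriction-to-Xhat} (Jordan chains of $T$ at the nonzero eigenvalue $\mu$ lie in $\widehat{X}$) shows that the generalized eigenspace $\mathcal{E}_\mu$ of $\mu$ is contained in $\widehat{X}$ and coincides for $T$ and $T\restriction_{\widehat{X}}$; in particular the norm $\norm{\cdot}_{\widehat{X}}$ restricted to $\mathcal{E}_\mu$ is well defined, as in the statement. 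Finally, every eigenvector of $T\restriction_{\widehat{X}}$ is an eigenvector of $T$ for the same eigenvalue (it lies in $\widehat{X}\subseteq X$), so the point spectrum of $T\restriction_{\widehat{X}}$ is contained in that of $T$ and $\mu$ is the only eigenvalue of $T\restriction_{\widehat{X}}$ in $\Delta$. Now \cref{Chatelin-norm-convergence} applies and yields a positive integer $\overline{N}$ such that for $N\geq\overline{N}$ the operator $\widehat{T}_N$ has exactly $\nu$ eigenvalues $\mu_{N,j}$ in $\Delta$ (counting multiplicities) with $\max_j\abs{\mu_{N,j}-\mu}=O(\epsilon_N^{1/l})$, where $\epsilon_N=\norm{(\widehat{T}_N-T\restriction_{\widehat{X}})\restriction_{\mathcal{E}_\mu}}_{\widehat{X}\leftarrow\mathcal{E}_\mu}$; since $\mathcal{E}_\mu\subseteq\widehat{X}$ this is exactly the $\epsilon_N$ of the statement. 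Setting $N_1\coloneqq\max\{N_0,\overline{N}\}$ gives the first two assertions.

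For the final assertion I would simply bound $\epsilon_N\leq\norm{(\widehat{T}_N-T\restriction_{\widehat{X}})\restriction_{\widehat{X}}}_{\widehat{X}\leftarrow\widehat{X}}$ and then invoke the quantitative estimate already proved inside \cref{norm-convergence}, namely $\norm{(\widehat{T}_N-T)\phi}_{\widehat{X}}\leq(\text{positive constant})\cdot\norm{(\mathcal{L}_N^+-I_{X^+})\restriction_{\widehat{X}^+}}_{X^+\leftarrow\widehat{X}^+}\norm{\phi}_{\widehat{X}}$ for $\phi\in\widehat{X}$, which immediately yields $\epsilon_N=O(\norm{(\mathcal{L}_N^+-I_{X^+})\restriction_{\widehat{X}^+}}_{X^+\leftarrow\widehat{X}^+})$. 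The main obstacle is none of the estimates — which are already done in the cited results — but the bookkeeping of Paragraph~2: one must be careful that \cref{T-hTN-restriction-to-Xhat} delivers not merely equality of nonzero eigenvalues but also of partial multiplicities and eigenvectors, which is what forces $\mathcal{E}_\mu\subseteq\widehat{X}$ and makes the reduction to \cref{Chatelin-norm-convergence} on the space $\widehat{X}$ legitimate.
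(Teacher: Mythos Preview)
Your proof is correct and follows the same overall strategy as the paper: reduce to \cref{Chatelin-norm-convergence} on $\widehat{X}$ via \cref{norm-convergence} and \cref{T-hTN-restriction-to-Xhat}. Your bookkeeping (transferring algebraic multiplicity and ascent, verifying $\mathcal{E}_\mu\subseteq\widehat{X}$, checking that $\mu$ is still isolated for $T\restriction_{\widehat{X}}$) is in fact more explicit than the paper's, which compresses all of this into a single sentence.

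The one genuine difference is in the last assertion. You bound $\epsilon_N$ by the full operator norm $\norm{\widehat{T}_N-T\restriction_{\widehat{X}}}_{\widehat{X}\leftarrow\widehat{X}}$ and then quote the uniform estimate from the proof of \cref{norm-convergence}. The paper instead fixes a basis $\{\phi_1,\dots,\phi_\nu\}$ of $\mathcal{E}_\mu$, uses equivalence of norms on the finite-dimensional space to get $\epsilon_N\leq c\sum_j\norm{(\widehat{T}_N-T)\phi_j}_{\widehat{X}}$, and then bounds each term via the intermediate inequality \cref{eq:estimate}, which isolates $\norm{(\mathcal{L}_N^+-I_{X^+})z^*_j}_{X^+}$ for the individual fixed-point solutions $z^*_j$. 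Your route is shorter and fully suffices for the proposition as stated; the paper's basis route is slightly longer but buys something the authors exploit later in \cref{sec:case}: it leaves the bound in terms of the interpolation error on the specific functions $z^*_j$, which allows sharper (e.g., $C^p$-based) convergence rates when the generalized eigenfunctions happen to be smooth.
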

\begin{proof}
By \cref{norm-convergence}, $\norm{\widehat{T}_{N}\restriction_{\widehat{X}} - T\restriction_{\widehat{X}}}_{\widehat{X} \leftarrow \widehat{X}} \to 0$ for $N \to \infty$.
The first part of the thesis is obtained by applying \cref{Chatelin-norm-convergence,T-hTN-restriction-to-Xhat}.
Observe that $\mathcal{E}_{\mu}\subseteq\widehat{X}$ from \cref{T-hTN-restriction-to-Xhat}.

Let $\{\phi_{1}, \dots, \phi_{\nu}\}$ be a basis of $\mathcal{E}_{\mu}$.
An element $\phi\in\mathcal{E}_{\mu}$ can be written as $\phi = \sum_{j = 1}^{\nu} \alpha_{j}(\phi) \phi_{j}$, with $\alpha_{j}(\phi) \in \mathbb{C}$ for $j \in \{1, \dots, \nu\}$, hence
\begin{equation*}
\norm{(\widehat{T}_{N} - T) \phi}_{\widehat{X}} \leq \max_{j \in \{1, \dots, \nu\}} \abs{\alpha_{j}(\phi)} \cdot \sum_{j = 1}^{\nu} \norm{(\widehat{T}_{N} - T) \phi_{j}}_{X}.
\end{equation*}
The function $\phi \mapsto \max_{j \in \{1, \dots, \nu\}} \abs{\alpha_{j}(\phi)}$ is a norm on $\mathcal{E}_{\mu}$, so it is equivalent to the norm of $\widehat{X}$ restricted to $\mathcal{E}_{\mu}$.
Thus, in particular, there exists a positive constant $c$ independent of $\phi$ such that
$\max_{j \in \{1, \dots, \nu\}} \abs{\alpha_{j}(\phi)} \leq c \norm{\phi}_{X}$
and
\begin{equation*}
\epsilon_{N} = \norm{(\widehat{T}_{N} - T) \restriction_{\mathcal{E}_{\mu}}}_{\widehat{X} \leftarrow \mathcal{E}_{\mu}} \leq c \sum_{j = 1}^{\nu} \norm{(\widehat{T}_{N} - T) \phi_{j}}_{\widehat{X}}.
\end{equation*}
Moreover, for $j \in \{1, \dots, \nu\}$, as seen in the proof of~\cref{norm-convergence},
\begin{equation}\label{eq:estimate}
\begin{aligned}
\norm{(\widehat{T}_{N} - T) \phi_{j}}_{\widehat{X}}
&\leq 2 \hat{c}_2 \norm{(I_{X^{+}} - \mathcal{F}_{s} V^{+})^{- 1}}_{X^{+} \leftarrow X^{+}} \norm{(\mathcal{L}_{N}^{+} - I_{X^{+}}) z^{\ast}_j}_{X^{+}}, \\
&\leq 2 \hat{c}_2 \norm{(I_{X^{+}} - \mathcal{F}_{s} V^{+})^{- 1}}_{X^{+} \leftarrow X^{+}} \norm{(\mathcal{L}_{N}^{+} - I_{X^{+}}) \restriction_{\widehat{X}^{+}}}_{X^{+} \leftarrow \widehat{X}^{+}} \norm{z^{\ast}_j}_{\widehat{X}^{+}},
\end{aligned}
\end{equation}
where $z^{\ast}_{j}\in\widehat{X}^+$ is the solution of~\cref{fixed_point} associated to $\phi_{j}$.
\end{proof}

\begin{theorem}
\label{convergence-theorem}
Assume that \cref{H-I-FsVp,H-Xhatp,H-Xhat,H-compat-1} hold, let $N_{0}$ be given by \cref{cont_coll_op-invertible} and assume that there exists $N_1\geq N_0$ such that \cref{H-compat-2} holds for all $N\geq N_1$ and for a chosen $M$.
If $\mu \in \mathbb{C} \setminus \{0\}$ is an eigenvalue of $T$ with finite algebraic multiplicity $\nu$ and ascent $l$, and $\Delta$ is a neighborhood of $\mu$ such that $\mu$ is the only eigenvalue of $T$ in $\Delta$, then there exists a positive integer $N_{2} \geq N_{1}$ such that, for any $N \geq N_{2}$, $T_{M,N}$ has in $\Delta$ exactly $\nu$ eigenvalues $\mu_{M,N,j}$, $j \in \{1, \dots, \nu\}$, counting their multiplicities.
Moreover,
\begin{equation}\label{eq:final-error}
\max_{j \in \{1, \dots, \nu\}} \abs{\mu_{M,N, j} - \mu} = O(\epsilon_{N}^{1 / l})
\end{equation}
with $\epsilon_N = O(\norm{(\mathcal{L}_{N}^{+} - I_{X^{+}}) \restriction_{\widehat{X}^{+}}}_{X^{+} \leftarrow \widehat{X}^{+}})$.
\end{theorem}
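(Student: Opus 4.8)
The plan is to obtain the statement by concatenating the three results already established: \cref{eig-convergence}, which governs the convergence of the eigenvalues of $\widehat{T}_{N}$ to those of $T$, and \cref{hTMN-hTN-same,TMN-hTMN-same}, which identify the nonzero spectra — with geometric, partial, and hence algebraic multiplicities — of $T_{M,N}$, $\widehat{T}_{M,N}$ and $\widehat{T}_{N}$. Since $\mu\neq 0$, I would first arrange that $0\notin\Delta$: this costs nothing, as shrinking $\Delta$ to a ball around $\mu$ preserves the hypothesis that $\mu$ is the only eigenvalue of $T$ in $\Delta$, and it is in any case the setting in which \cref{eig-convergence} is applied. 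The point of doing so is that then every eigenvalue of the operators below that lies in $\Delta$ is nonzero, which is exactly what makes the ``same nonzero eigenvalues'' correspondences usable for counting multiplicities inside $\Delta$.

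The hypotheses \cref{H-I-FsVp,H-Xhatp,H-Xhat} required by \cref{eig-convergence} are among those assumed here (with \cref{H-Xhatp,H-Xhat} supplying all the sub-hypotheses \cref{H-conv,H-Pi-V-Xhat,H-hat-norm,H-FsVp-to-Xhat,H-FsVm-to-Xhat,H-Vp-Hhatp-Xhat,H-norm-Vp-z-h,H-V-Xhat-chain} used downstream), so \cref{eig-convergence} yields an integer $\widetilde{N}\geq N_0$ such that, for every $N\geq\widetilde{N}$, the operator $\widehat{T}_{N}$ has in $\Delta$ exactly $\nu$ eigenvalues $\mu_{N,j}$ counting multiplicities, with $\max_j\abs{\mu_{N,j}-\mu}=O(\epsilon_N^{1/l})$ and $\epsilon_N=O(\norm{(\mathcal{L}_N^+-I_{X^+})\restriction_{\widehat{X}^+}}_{X^+\leftarrow\widehat{X}^+})$. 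I would then set $N_2\coloneqq\max\{N_1,\widetilde{N}\}\geq N_1\geq N_0$, where $N_1$ is the integer from the hypothesis for which \cref{H-compat-2} holds for the chosen $M$ and all $N\geq N_1$, and fix any $N\geq N_2$.

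For such $N$ one has $N\geq N_0$, \cref{H-compat-2} holds for $M$ and $N$, and \cref{H-I-FsVp,H-FsVp-to-Xhat,H-conv,H-FsVm-to-Xhat,H-compat-1} are in force, so \cref{hTMN-hTN-same} gives that $\widehat{T}_{M,N}$ has the same nonzero eigenvalues, with the same geometric and partial multiplicities, as $\widehat{T}_{N}$, while \cref{TMN-hTMN-same} (using \cref{H-compat-1}) gives that $T_{M,N}$ has the same nonzero eigenvalues, with the same geometric and partial multiplicities, as $\widehat{T}_{M,N}$. Since the algebraic multiplicity is the sum of the partial multiplicities, it too is preserved along $T_{M,N}\leftrightarrow\widehat{T}_{M,N}\leftrightarrow\widehat{T}_{N}$; combined with $0\notin\Delta$, this shows that the eigenvalues of $T_{M,N}$ lying in $\Delta$ are precisely the $\mu_{N,j}$, with the same multiplicities. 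Hence $T_{M,N}$ has in $\Delta$ exactly $\nu$ eigenvalues $\mu_{M,N,j}$ counting multiplicities, and taking $\mu_{M,N,j}=\mu_{N,j}$ makes \cref{eq:final-error} and the claimed bound $\epsilon_N=O(\norm{(\mathcal{L}_N^+-I_{X^+})\restriction_{\widehat{X}^+}}_{X^+\leftarrow\widehat{X}^+})$ immediate consequences of \cref{eig-convergence}.

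This argument is essentially bookkeeping — collecting the hypotheses that feed each invoked result and tracking the threshold integers $N_0$, $N_1$, $\widetilde{N}$, $N_2$ — so I do not anticipate a genuine obstacle. The one point deserving care is the passage of the eigenvalue count from $\widehat{T}_{N}$ to $T_{M,N}$: the intermediate finite-rank operator $\widehat{T}_{M,N}$, and $\widehat{T}_{N}$ itself, generically has $0$ in its spectrum with an infinite-dimensional generalized eigenspace, so only the \emph{nonzero} spectra can be matched through \cref{TMN-hTMN-same,hTMN-hTN-same}; restricting to a neighborhood $\Delta$ with $0\notin\Delta$ removes this difficulty and is harmless because $\mu\neq 0$.
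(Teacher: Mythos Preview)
Your proposal is correct and follows essentially the same approach as the paper: combine \cref{TMN-hTMN-same,hTMN-hTN-same} to identify the nonzero spectra (with multiplicities) of $T_{M,N}$, $\widehat{T}_{M,N}$ and $\widehat{T}_{N}$, then invoke \cref{eig-convergence}. Your treatment is in fact more careful than the paper's two-sentence proof, explicitly handling the bookkeeping of the thresholds $N_0,N_1,\widetilde{N},N_2$ and the harmless reduction to $0\notin\Delta$.
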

\begin{proof}
If $N\geq N_0$ and \cref{H-compat-2} holds for $M$ and $N$, by \cref{TMN-hTMN-same,hTMN-hTN-same} the operators $T_{M, N}$, $\widehat{T}_{M, N}$ and $\widehat{T}_{N}$ have the same nonzero eigenvalues, with the same geometric and partial multiplicities and associated eigenvectors.
The thesis follows by \cref{eig-convergence}.
\end{proof}

\section{Case studies}
\label{sec:case}

In \cref{sec:pscoll-rfde} and \cref{sec:pscoll-re} we revisit \cite{BredaMasetVermiglio2012,BredaMasetVermiglio2015} and \cite{BredaLiessi2018}, respectively, in light of this work, describing the operators $V$ and $\mathcal{F}_s$, the auxiliary spaces and the pseudospectral collocation methods, and verifying the hypotheses of \cref{sec:hypotheses}; we also summarize the results on the order of convergence based on the regularity of the relevant eigenfunctions.
In \cref{sec:pscoll-pw} we provide the convergence proof for the piecewise approach of \cite{BredaLiessiVermiglio2022} for the first time by applying the proposed framework.
Finally in \cref{sec:weightedresiduals} we provide the convergence proof also for a method of weighted residuals as an example of a method from the literature lacking a formal proof.

\subsection{Pseudospectral collocation of RFDEs}
\label{sec:pscoll-rfde}


For $s \in \mathbb{R}$, we consider the IVP defined by \cref{RFDE} for $t\geq s$ and $x_s = \phi \in X$.
It is a classical result (see, e.g., \cite[Theorems 2.2.1, 2.2.2 and 2.2.3]{HaleVerduynLunel1993} and \cite[Theorem 3.7 and Remark 3.8]{Smith2011}) that local Lipschitz continuity of the right-hand side of the RFDE implies the existence and uniqueness of solutions of the IVP and their continuous dependence on the initial data; moreover, the solutions are globally defined in the future if the Lipschitz continuity is global.
In particular, if $t \mapsto \norm{L(t)}_{\mathbb{R}^{d} \leftarrow X}$ is bounded (as is the case, e.g., if $L(t)$ is periodic), for each $s \in \mathbb{R}$ and $\phi \in X$ the IVP admits a unique solution on $[s-\tau, +\infty)$.

For $t\geq s$, the evolution operator $U(t,s)\colon X\to X$ is defined as $U(t, s) \phi = x_{t}(\cdot; s, \phi)$, where $x(t; s, \phi)$ is the solution of the IVP.

As the spaces $X^+$ and $X^{\pm}$ we consider $C([0, h], \mathbb{R}^{d})$ and $C([-\tau, h], \mathbb{R}^{d})$, respectively, with the norms $\norm{\cdot}_{X^{+}}$ and $\norm{\cdot}_{X^{\pm}}$ being the corresponding uniform norms.

We define the operator $V \colon X \times X^{+} \to X^{\pm}$ as
\begin{equation*}
V(\phi, z)(t) \coloneqq
\begin{cases}
\phi(0) + \displaystyle\int_{0}^{t} z(\sigma) \dd \sigma, & \quad t \in (0, h], \\
\phi(t), & \quad t \in [-\tau, 0],
\end{cases}
\end{equation*}
and the operator $\mathcal{F}_{s} \colon X^{\pm} \to X^{+}$ as
\begin{equation}\label{Fs_RFDE}
\mathcal{F}_{s} u(t) \coloneqq L(s+t) u_t, \qquad t \in [0, h].
\end{equation}

Assuming that $t \mapsto \norm{L(t)}_{\mathbb{R}^{d} \leftarrow X}$ is bounded, it is easy to prove that \cref{H-I-FsVp} is satisfied (see \cite[end of section 2]{BredaMasetVermiglio2012}).

We choose $\widetilde{X}=X$ and $\widetilde{X}^+=X^+$, since point-wise evaluation does not require more than continuity.

If $h \geq \tau$, we choose interpolation nodes $\theta_{M, 0}, \dots, \theta_{M, M}$ in $[-\tau, 0]$ with%
\footnote{In principle, it is not necessary to require that the endpoints of the interval are among the nodes.
However, including the endpoints of is convenient since very often the right-hand side uses the values of the unknown function at the current time and at the maximum delay: with such nodes, interpolation can be avoided in those cases.}
\begin{equation*}
- \tau = \theta_{M, M} < \dots < \theta_{M, 0} = 0,
\end{equation*}
$X_{M} \coloneqq \mathbb{R}^{d (M + 1)}$ and define $R_M$ as, for $\phi\in X$,
\begin{equation*}
R_{M} \phi \coloneqq (\phi(\theta_{M, 0})^T, \dots, \phi(\theta_{M, M})^T)^T.
\end{equation*}
We define $P_{M}$ as the discrete Lagrange interpolation operator
\begin{equation*}
(P_{M} \Phi)(\theta) \coloneqq \sum_{m = 0}^{M} \ell_{M, m}(\theta) \Phi_{m}, \qquad \theta \in [-\tau, 0],
\end{equation*}
where $\{\ell_{M, 0}, \dots, \ell_{M, M}\}$ is the Lagrange basis for the chosen nodes and $\Phi_m\in\mathbb{R}^{d}$ is the $m$-th block of $d$ components of $\Phi$.
The space $\Pi_M$ is the space of polynomials of degree at most $M$ on $[-\tau,0]$.
For the case $h<\tau$, see \cite{BredaMasetVermiglio2012,BredaMasetVermiglio2015}.

We also choose interpolation nodes $t_{N, 1}, \dots, t_{N, N}$ in $[0,t]$ with
\begin{equation*}
0 \leq t_{N, 0} < \dots < t_{N, N} \leq h,
\end{equation*}
$X^+_{N} \coloneqq \mathbb{R}^{d (N + 1)}$ and define $R^+_N$ as, for $z\in X^+$,
\begin{equation*}
R_{N}^{+} z \coloneqq (z(t_{N, 0})^T, \dots, z(t_{N, N})^T)^T.
\end{equation*}
We define $P^+_{N}$ as the discrete Lagrange interpolation operator
\begin{equation*}
(P_{N}^{+} Z)(t) \coloneqq \sum_{n = 0}^{N} \ell_{N, n}^{+}(t) Z_{n}, \qquad t \in [0, h],
\end{equation*}
where $\{\ell^+_{N, 0}, \dots, \ell^+_{N, N}\}$ is the Lagrange basis for the chosen nodes and $Z_n\in\mathbb{R}^d$ is the $n$-th block of $d$ components of $Z$.
The space $\Pi^+_N$ is the space of polynomials of degree at most $N$ on $[0,h]$.

It is clear that \cref{H-compat-1,H-compat-2} are satisfied for all discretization indices $M$ and $N$ with $M\geq N+1$.

In \cite{BredaMasetVermiglio2012,BredaMasetVermiglio2015} the subspaces of $X$ and $X^+$ of Lipschitz continuous functions were chosen for $\widehat{X}$ and $\widehat{X}^+$, respectively, with norms defined, for $\phi\in \widehat{X}$ and $z\in \widehat{X}^+$, by $\norm{\phi}_{\widehat{X}}\coloneqq \norm{\phi}_{X}+\Lip(\phi)$ and $\norm{z}_{\widehat{X}^+}\coloneqq \norm{z}_{X^+}+\Lip(z)$, where $\Lip(\cdot)$ denotes the Lipschitz constant.
In \cite{Liessi2018}, instead, the subspaces of $X$ and $X^+$ of absolutely continuous functions%
\footnote{Recall that an absolutely continuous function has almost everywhere in its domain a derivative which is Lebesgue-integrable.}
were chosen for $\widehat{X}$ and $\widehat{X}^+$, respectively, with norms defined, for $\phi\in \widehat{X}$ and $z\in \widehat{X}^+$, by $\norm{\phi}_{\widehat{X}}\coloneqq \norm{\phi}_{L^1([-\tau,0],\mathbb{R}^d)}+\norm{\phi'}_{L^1([-\tau,0],\mathbb{R}^d)}$ and $\norm{z}_{\widehat{X}^+}\coloneqq \norm{z}_{L^1([0,h],\mathbb{R}^d)}+\norm{z'}_{L^1([0,h],\mathbb{R}^d)}$.
In the following we will reference the first case as the Lip case and the second as the AC case.

In both cases, \cref{H-Pi-V-Xhat,H-Vp-Hhatp-Xhat,H-V-Xhat-chain} are obviously satisfied.
\Cref{H-hat-norm,H-norm-Vp-z-h} are also satisfied, with $\hat{c}_1=1$ and $\hat{c}_2=1+h$ in the Lip case and with $\hat{c}_1$ given by \cite[Theorem 8.8]{Brezis2011} and $\hat{c}_2=h(1+h/2)$ in the AC case.

In the Lip case, \cref{H-conv} is satisfied if the Lebesgue constant of the chosen nodes in $[0,h]$ is $\Lambda^+_N=o(N)$.
Indeed, by well-known results in interpolation theory (see, e.g., \cite[Corollary 1.4.2 and Theorem 4.1]{Rivlin1969}), for $z\in\widehat{X}^+$ the bound
\begin{equation*}
\norm{(\mathcal{L}_{N}^{+} - I_{X^{+}})z}_{X^{+}}
\leq (1 + \Lambda^+_{N}) E_N(z)
\leq (1 + \Lambda^+_{N}) \frac{h}{2} \frac{6\Lip(z)}{N}
\end{equation*}
holds, where $E_N(\cdot)$ is the best uniform approximation error on $[0,h]$.
Now \cref{H-conv} follows from the Banach--Steinhaus uniform boundedness theorem (see, e.g., \cite[page 269]{RoydenFitzpatrick2010}).
One notable family of nodes satisfying the required condition is that of Chebyshev zeros%
\footnote{We choose here to associate to the discretization index $N$ the $N+1$ zeros of the Chebyshev polynomial of the first kind of degree $N+1$, instead of the traditional choice of the degree $N$, to maintain consistency with the definitions given in \cref{sec:numerical}.
Of course, since we eventually consider the limit as $N\to+\infty$, there is no practical difference.}
\begin{equation}\label{cheb-zeros}
\frac{h}{2}\left(1 - \cos\left(\frac{(2n + 1) \pi}{2(N+1)}\right)\right), \qquad n \in \{0,\ldots,N\}.
\end{equation}
In fact, for these nodes the Lebesgue constant $\Lambda^+_{N}$ is bounded by $\frac{2}{\pi}\log(N+1)+1$ \cite{Rivlin1974}.

In the AC case, instead, we directly require the use of Chebyshev zeros.
Indeed, by \cite[Theorem 1]{Krylov1956-en}, if $z \in \widehat{X}^{+}$, then $\mathcal{L}_{N}^{+} z \to z$ uniformly as $N \to +\infty$ and \cref{H-conv} follows, again, by the Banach--Steinhaus theorem.

For the chosen prototype form of RFDE \cref{RFDE-prototype}, in the Lip case, \cref{H-FsVp-to-Xhat,H-FsVm-to-Xhat} are satisfied if $A$ and $B_k$ are Lipschitz continuous and there exists $m_k\in L^1([-\tau_k,-\tau_{k-1}],\mathbb{R})$ such that $\norm{C_k(t_1,\theta)-C_k(t_2,\theta)}\leq m_k(\theta)\abs{t_1-t_2}$ for all $t_1,t_2\in[s,s+h]$ and almost all $\theta\in[-\tau_k,-\tau_{k-1}]$, for $k\in\{1,\dots,r\}$.
In the AC case, instead, \cref{H-FsVp-to-Xhat,H-FsVm-to-Xhat} are satisfied if $A$ and $B_k$ are absolutely continuous and the following conditions on $C_k$ (see \cite[Proposition 24]{BredaLiessi2021}) are fulfilled for $k\in\{1,\dots,r\}$:
\begin{itemize}
\item for each compact interval $J \subset \mathbb{R}$
\begin{equation*}
\esssup_{(t, \theta) \in J \times [-\tau, 0]} \abs{C_k(t, \theta)} < +\infty;
\end{equation*}
\item the directional derivative $\partial_{(1, -1)} C_k(t, \theta)$ exists for all $t \in \mathbb{R}$ and almost all $\theta \in [-\tau, 0]$;
\item there exist $\eta_k > 0$ and an essentially bounded function $m_k$ such that for all $t \in \mathbb{R}$, almost every $\theta \in [-\tau, 0]$, and all $\eta$ with $0 < \abs{\eta} < \eta_k$ (with $\theta - \eta \in [-\tau, 0]$)
\begin{equation*}
\abs{C_k(t+\eta, \theta-\eta) - C_k(t, \theta)} \leq m_k(t+\theta) \abs{\eta}.
\end{equation*}
\end{itemize}

With the choices above, all the results of \cref{sec:convergence} hold, in particular \cref{eig-convergence,convergence-theorem}.
Moreover, we can make the error estimates more specific.
In fact, from \cref{eq:estimate} we know that
\begin{equation*}
\epsilon_N \leq \text{(constant)} \cdot \sum_{j=1}^{\nu} \norm{(\mathcal{L}_{N}^{+} - I_{X^{+}}) z^{\ast}_j}_{X^{+}},
\end{equation*}
where $z^{\ast}_{j}\in\widehat{X}^+$ for $j\in\{1,\dots,\nu\}$ are the solutions of~\cref{fixed_point} associated to the elements $\phi_{j}$ of a basis of the generalized eigenspace $\mathcal{E}_{\mu}$ associated to $\mu$.
If for each $\phi\in\mathcal{E}_{\mu}$ the corresponding solution $z^*$ of \cref{fixed_point} is of class $C^{p}$, with $p \geq 1$, by well-known results in interpolation theory (see, e.g., \cite[Theorem 1.5]{Rivlin1969} and the already cited \cite[Theorem 4.1]{Rivlin1969}) the bound
\begin{equation}\label{RFDE-final-estimate}
\begin{split}
\norm{(\mathcal{L}_{N}^{+} - I_{X^{+}})z^{\ast}_{j}}_{X^{+}} &\leq (1 + \Lambda^+_{N}) E_N(z^{\ast}_{j}) \\
&\leq (1 + \Lambda^+_{N}) \frac{6^{p + 1} \mathrm{e}^{p}}{1 + p} \left(\frac{h}{2}\right)^{p} \frac{1}{N^{p}} \, \omega_p\left(\frac{h}{2(N - p)}\right)
\end{split}
\end{equation}
holds for $N>p$, where $E_N(\cdot)$ is the best uniform approximation error and $\omega_p(\cdot)$ is the modulus of continuity of $(z^{\ast}_{j})^{(p)}$ on $[0, h]$.
Since we assumed that $\Lambda^+_N=o(N)$ (as a direct assumption in the Lip case and as a consequence of the choice of Chebyshev zeros in the AC case), we conclude that $\epsilon_{N} = o(N^{1 - p})$ and the error estimate \cref{eq:final-error} becomes
\begin{equation*}
\max_{j \in \{1, \dots, \nu\}} \abs{\mu_{N, j} - \mu} = o(N^{(1 - p)/l}).
\end{equation*}

Other families of interpolation nodes may be used also in the AC case, provided that they guarantee the validity of \cref{H-conv}.
The error estimate above is conserved as long as their Lebesgue constant still satisfies $\Lambda^+_{N} = o(N)$.
Observe that both are guaranteed by zeros of other families of classic orthogonal polynomials \cite{Brutman1997}.
We chose Chebyshev nodes here since it is the typical choice of the authors and colleagues in the implementation of this method \cite{BredaMasetVermiglio2012,BredaMasetVermiglio2015}.

In general, computing the integrals in~\cref{Fs_RFDE} (see \cref{L_RFDE}) exactly may not be possible.
In that case, an approximation $\widetilde{\mathcal{F}}_{s}$ of $\mathcal{F}_{s}$ is used, leading to a further contribution to the error.

\subsection{Pseudospectral collocation of REs}
\label{sec:pscoll-re}

For $s\in\mathbb{R}$, we consider the IVP defined by \cref{RFE} for $t>s$ and $x_s = \phi \in X$.
As long as $h\coloneqq t-s \in [0, \tau]$, this corresponds to the Volterra integral equation (VIE) of the second kind
\begin{equation*}
y(h) = \int_{0}^{h} K(h, \sigma) y(\sigma) \dd \sigma + f(h)
\end{equation*}
for
\begin{equation*}
K(h, \sigma) \coloneqq C(s + h, \sigma - h)
\end{equation*}
and
$f(h) \coloneqq \int_{h - \tau}^{0} K(h, \sigma) \phi(\sigma) \dd \sigma$.
With standard regularity assumptions on the kernel $C$, the solution exists unique and bounded in~$L^{1}$ (see \cite[Theorem 2.2]{BredaLiessi2018}).
Moreover, a reasoning on the lines of Bellman's method of steps~\cite{Bellman1961,BellmanCooke1965} allows to extend well-posedness to any $h > 0$, by working successively on $[\tau,2\tau]$, $[2\tau,3\tau]$ and so on (see also \cite{BellenZennaro2003,BellmanCooke1963} for similar arguments, and \cite[section 4.1.2]{Brunner2004} for VIEs).

For $t\geq s$, the evolution operator $U(t,s)\colon X\to X$ is defined as $U(t, s) \phi = x_{t}(\cdot; s, \phi)$, where $x(t; s, \phi)$ is the solution of the IVP.

As the spaces $X^+$ and $X^{\pm}$ we consider $L^{1}([0, h], \mathbb{R}^{d})$ and $L^{1}([-\tau, h], \mathbb{R}^{d})$, respectively, with the norms $\norm{\cdot}_{X^{+}}$ and $\norm{\cdot}_{X^{\pm}}$ being the corresponding $L^{1}$ norms.

We define the operator $V \colon X \times X^{+} \to X^{\pm}$ as
\begin{equation*}
V(\phi, z)(t) \coloneqq
\begin{cases}
z(t), & \quad t \in [0, h], \\
\phi(t), & \quad t \in [-\tau, 0).
\end{cases}
\end{equation*}
Note that $V(\phi, z)$ can have a discontinuity in $0$ even when $\phi$ and $z$ are continuous, if $\phi(0) \neq z(0)$.
This important difference with respect to \cref{sec:pscoll-rfde} calls for special attention to discontinuities and to the role of $0$.
Observe that even if $V(\phi,z)(0)$ is formally given by $z(0)$ (instead of $\phi(0)$ as we would expect from the definition of the IVP), we can still say that $V(\phi,z)_0=V(\phi,z)\restriction_{[-\tau,0]}=\phi$ since elements of $X$ are equivalence classes of functions that are equal almost everywhere.
We define the operator $\mathcal{F}_{s} \colon X^{\pm} \to X^{+}$ as
\begin{equation*}
\mathcal{F}_{s} u(t) \coloneqq \int_{- \tau}^{0} C(s+t, \theta) u(t+\theta) \dd \theta, \qquad t \in [0, h].
\end{equation*}
We show below that with some assumptions on $C$ the elements of the range of $\mathcal{F}_s$ are continuous.

To be more precise on the well-posedness of the problem, according to \cite[Theorem 2.2]{BredaLiessi2018}) the validity of \cref{H-I-FsVp} is guaranteed if the interval $[0,\tau]$ can be partitioned into finitely many subintervals $J_{1}, \dots, J_{n}$ such that, for any $s\in\mathbb{R}$,
\begin{equation}\label{esssup-eq}
\esssup_{\sigma \in J_{i}} \int_{J_{i}} \abs{C(s + t, \sigma - t)} \dd t < 1, \qquad i \in \{1, \dots, n\}.
\end{equation}

As the spaces $\widetilde{X}$ and $\widetilde{X}^+$ we consider the subspaces of functions continuous from the right, in order to make point-wise evaluation meaningful.
For $\widetilde{X}$ we cannot use continuous functions, since the value of the operator $V$ can still be discontinuous at $0$ even if its arguments are continuous, as observed above.
The space $\widetilde{X}^+$, instead, can be the subspace of continuous functions.

The operators $R_M$, $R^+_N$, $P_M$ and $P^+_N$ and the spaces $\Pi_M$ and $\Pi^+_N$ are defined as in \cref{sec:pscoll-rfde} (for $h<\tau$ see \cite{BredaLiessi2018}).

It is clear that \cref{H-compat-1,H-compat-2} are satisfied for all discretization indices $M$ and $N$ with $M\geq N$, with $\mathcal{F}_s V(\phi,z) \in \widetilde{X}^+$ being satisfied with some assumptions on $C$ as shown below.

As the space $\widehat{X}^+$ we consider the space of continuous functions $C([0, h], \mathbb{R}^{d})$ with the uniform norm, while we take $\widehat{X}\coloneqq X$.
With these choices, \cref{H-Pi-V-Xhat,H-Vp-Hhatp-Xhat,H-V-Xhat-chain} are obviously satisfied and \cref{H-hat-norm,H-norm-Vp-z-h} are satisfied with $\hat{c}_1=h$ and $\hat{c}_2=1$.

By~\cite[Corollary of Theorem Ia]{ErdosTuran1937}, if the interpolation nodes in $[0,h]$ are the Chebyshev zeros \cref{cheb-zeros}, then \cref{H-conv} is satisfied.

The following assumptions on the integration kernel $C$ imply the validity of \cref{H-FsVp-to-Xhat,H-FsVm-to-Xhat}:
\begin{itemize}
\item there exists $\gamma > 0$ such that $\abs{C(t, \theta)} \leq \gamma$ for all $t \in [0, h]$ and almost all $\theta \in [-\tau, 0]$;
\item $t \mapsto C(t, \theta)$ is continuous for almost all $\theta \in [-\tau, 0]$, uniformly with respect to~$\theta$.
\end{itemize}
Indeed, let $u \in X^{\pm}\setminus\{0_{X^{\pm}}\}$, $t \in [0, h]$ and $\epsilon > 0$.
From the continuity of translation in~$L^{1}$ there exists $\delta' > 0$ such that for all $t' \in [0, h]$ if $\abs{t' - t} < \delta'$ then $\int_{-\tau}^{0} \abs{u(t' + \theta) - u(t + \theta)} \dd \theta < \frac{\epsilon}{2 \gamma}$.
From the continuity of $C$ there exists $\delta'' > 0$ such that for all $t' \in [0, h]$ and almost all $\theta \in [-\tau, 0]$ if $\abs{t' - t} < \delta''$ then $\abs{C(t', \theta) - C(t, \theta)} < \frac{\epsilon}{2 \norm{u}_{X^{\pm}}}$.
Hence, for all $t' \in [0, h]$ if $\abs{t' - t} < \delta \coloneqq \min\{\delta', \delta''\}$ then
\begin{equation*}
\begin{split}
&\abs[\Big]{\int_{-\tau}^{0} C(t', \theta) u(t' + \theta) \dd \theta - \int_{-\tau}^{0} C(t, \theta) u(t + \theta) \dd \theta} \\
&\qquad \leq \int_{-\tau}^{0} \abs{C(t', \theta)} \abs{u(t' + \theta) - u(t + \theta)} \dd \theta + \int_{-\tau}^{0} \abs{C(t', \theta) - C(t, \theta)} \abs{u(t + \theta)} \dd \theta \\
&\qquad < \gamma \frac{\epsilon}{2 \gamma} + \frac{\epsilon}{2 \norm{u}_{X^{\pm}}} \int_{-\tau}^{0} \abs{u(t + \theta)} \dd \theta \leq \epsilon.
\end{split}
\end{equation*}
Since $\mathcal{F}_{s}0_{X^{\pm}}=0_{X^{+}}$, this shows that $\mathcal{F}_{s}(X^{\pm}) \subseteq \widehat{X}^{+}$, which implies the first part of \cref{H-FsVp-to-Xhat,H-FsVm-to-Xhat}.
Boundedness follows immediately since $\norm{\mathcal{F}_s(u)}_{\widehat{X}^+}\leq\gamma\norm{u}_{X^\pm}$.

The boundedness of $C$ assumed above implies also \cref{H-I-FsVp}.
Indeed, to satisfy \cref{esssup-eq} it is enough to choose subintervals $J_i\subseteq[-\tau,0]$ of length strictly less than $1/\gamma$.

With the choices above, all the results of \cref{sec:convergence} hold, in particular \cref{eig-convergence,convergence-theorem}.
The error estimates at the end of \cref{sec:pscoll-rfde} hold also in this case, with the only difference that a further factor $h$ appears in \cref{RFDE-final-estimate}.
The comments on other families of nodes (see \cite{BredaLiessi2018,Liessi2018} for the typical implementations of the method by the authors) and on the approximation of integrals are still valid as well.

\subsection{Piecewise pseudospectral collocation of RFDEs and REs}
\label{sec:pscoll-pw}

In applications, exact periodic solutions of delay equations are in general unknown, so numerical methods are needed.
Periodic solutions are usually approximated with continuous piecewise polynomials determined by collocating a corresponding boundary value problem on the period interval \cite{Bader1985,EngelborghsLuzyaninaIntHoutRoose2001,Ando2020,AndoBreda2020b,Ando2021,AndoBreda2023a}.
It is standard to adapt the partition of the period interval to the profile of the solution, moving away from uniform (for mesh adaptation see \cite{AscherMattheijRussell1988,EngelborghsLuzyaninaIntHoutRoose2001}).

When investigating the stability of the periodic orbit via the monodromy operator, the lack of smoothness at the partition points is reflected in the coefficients of the linearized equation and in the operator, deteriorating also the convergence of the computed Floquet multipliers.
The points where the coefficients are not differentiable should be included in the collocation grid, as is observed also in \cite[section~4]{BorgioliHajduInspergerStepanMichiels2020}.
Therefore, we consider a piecewise version of the methods of \cref{sec:pscoll-rfde,sec:pscoll-re}, discretizing the monodromy operator on a grid including the adapted partition of the period interval from the given numerical periodic solution.

When piecewise polynomials are considered, two approaches to convergence are available: the spectral elements method (SEM), consisting in increasing the degree of the polynomials while keeping the number of pieces constant, and the finite elements methods (FEM), consisting in reducing the maximum length of pieces while keeping the degree constant (if the partition is uniform, this is equivalent to increasing the number of pieces).

We thus consider a partition of $[0,h]$ and define the space $\Pi^+_N$ of continuous piecewise polynomials on this partition with a certain degree.
Here the discretization index $N$ is either the degree for the SEM or it is linked to the maximum length of pieces for the FEM (if the partition is uniform, the index can be the number of pieces).
We choose a set of interpolation nodes to be used in each piece (with suitable scaling and translation).
The operators $R^+_N$ and $P^+_N$ act as described in \cref{sec:pscoll-rfde} on each piece, with $P^+_N$ defining continuous piecewise polynomials.

As in the previous sections, we consider here the case $h\geq\tau$.
The partition of $[0,h]$ is translated by $-h$ to define a partition of $[-\tau,0]$.
The space $\Pi_M$ of continuous piecewise polynomials on this new partition with a certain degree remains defined, with the discretization index $M$ being the relevant parameter according to the choice of FEM or SEM.
Again, we choose a set of interpolation nodes to be used in each piece and the operators $R_M$ and $P_M$ act similarly to $R^+_N$ and $P^+_N$.

The spaces $X$, $X^+$, $X^\pm$, $\widetilde{X}$, $\widetilde{X}^+$, $\widehat{X}$, $\widehat{X}^+$ are defined as in \cref{sec:pscoll-rfde,sec:pscoll-re}.
\Cref{H-I-FsVp,H-Xhat} are satisfied independently of the discretization method, as long as $\widehat{X}$ and $\widehat{X}^+$ have been chosen.
The same is true for \cref{H-hat-norm,H-FsVp-to-Xhat}.
Since the elements of $\Pi^+_N$ are continuous piecewise polynomials on a finite number of pieces, \cref{H-Pi-V-Xhat} is automatically verified, both in the Lip and in the AC case for RFDEs.
Part of \cref{H-compat-1} is also clearly true; the inclusions $\Pi_M\subseteq\Pi_{M+1}$ and $\Pi^+_N\subseteq\Pi^+_{N+1}$ are obvious for the SEM, where the partition is fixed; for the FEM, instead, the partitions of $[0,h]$ and $[-\tau,0]$ identified by the discretization indices $N+1$ and $M+1$ need to be include all the points of the partitions of indices $N$ and $M$, respectively.
Given how the partition of $[-\tau,0]$ is constructed from that of $[0,h]$, \cref{H-compat-2} is verified by choosing for $\Pi_M$ a degree equal or larger than the one chosen for $\Pi^+_N$.
As for \cref{H-conv}, in the SEM case, the piecewise interpolation converges for the same reasons as with one single piece.
The FEM, instead, does not require specific choices or properties of the interpolation nodes; in this case the interpolation error is proportional to a power of the maximum length of the pieces, which ensures the convergence.
Hence, with the choices made above, the results of \cref{sec:convergence} hold, in particular \cref{eig-convergence,convergence-theorem}.

\subsection{Method of weighted residuals for RFDEs}
\label{sec:weightedresiduals}

The pseudospectral collocation belongs to the class of numerical methods relying on the \emph{method of weighted residuals}. We will now describe its application to our fixed-point equation \cref{fixed_point}.

For a function $z\in X^+$ and its approximation $\tilde{z}$, we make the ansatz
\begin{equation*}
\tilde{z}(\theta) = \sum_j a_j \hat{\phi}_j(\theta),
\end{equation*}
where $a_j\in\mathbb{R}^d$ and $\{\hat{\phi}_i\}_i$ constitute a basis for a finite dimensional function space. The residual $r$ is then defined as $r = \tilde{z}-\mathcal{F}_s V(\phi,\tilde{z})$. For a given set $\{\psi_i\}_i$ of \emph{test functions}, the equations defining the discretized problem read
\begin{equation*}
\langle r, \psi_i\rangle = 0,\qquad i = 0,\dots,N,
\end{equation*}
where the inner product is defined by $\langle f, g\rangle \coloneqq\int_{0}^h f(\theta)g(\theta)\dd\theta$. In other words, the fixed-point equation \cref{discrete_FP} is obtained by defining, for $z\in \widetilde{X}^+$,
$$
R_N^+z = (\langle z,p_0\rangle,\langle z,p_1\rangle,\dots,\langle z,p_N\rangle)^T\in\mathbb{R}^{d(N+1)}\eqqcolon X_N^+,$$
which, for appropriately chosen test functions, provides $N+1$ independent linear equations for the unknowns $\phi(\theta_{N,0}),\dots,\phi(\theta_{N,N})$. The operator $P_N^+:X_N^+\to X^+$ reconstructing $\tilde{z}$ starting from the values of $R_N^+z$ is given by
\begin{equation*}
(P_N^+Z)(\theta) = \sum_{i=0}^MZ_ip_i(\theta).
\end{equation*}
$R_M$ and $P_M$ are defined similarly, but in a piecewise fashion in the case $h<\tau$, generalizing the construction in \cref{sec:pscoll-rfde}.

When the functions $\{\hat{\phi}_i\}_{i\geq 0}$ are chosen as the Lagrange polynomials defined by Chebyshev nodes, and the test functions are the Dirac-delta distributions
\begin{equation*}
\psi_i(\theta)=\delta(\theta-\theta_i),
\end{equation*}
we retrieve the pseudospectral collocation method. For RFDEs, another method using the same set of basis functions but the Legendre polynomials $\{p_i\}_{i\geq 0}$ on $[0,h]$ as test functions is the \emph{spectral element method}, first introduced in \cite{KhasawnehMann2011a} for time-periodic linear RFDEs with discrete delays, and then extended to wider classes of RFDEs in \cite{KhasawnehMann2011b,KhasawnehMann2013,LehotzkyInspergerStepan2016}, albeit without a rigorous convergence proof.

For this case, we consider again the choices in \cref{sec:pscoll-rfde} for the spaces $X$, $X^+$, $X^\pm$, $\widetilde{X}$ and $\widetilde{X}^+$.
Due to the properties of the restriction and prolongation operators, \cref{H-compat-1,H-compat-2} hold again for all $M,\,N$ such that $M>N$. Finally, observe that, if $f$ is a Lipschitz continuous function on $[0,h]$ and $S_N$ is the sum of the first $N+1$ elements of its Legendre series, we have
\begin{equation*}
\norm{f-S_N}_{X^+} = O(\log(N)/N)
\end{equation*}
and, for any $[a,b]\subset(0,h)$,
\begin{equation*}
\norm{(f-S_N)\restriction_{[a,b]}}_{X^+} = O(N^{-1/2}),
\end{equation*}
\cite[pp. 31--32]{Jackson1930}. Thus, \cref{H-Xhatp} is satisfied by choosing again $\widehat{X}^+$ as the subspace of Lipschitz continuous functions of $X^+$.
Observe that \cref{H-I-FsVp} is satisfied for the reasons given in \cref{sec:pscoll-rfde}, since it only depends on the original problem and not on the method; the same holds for \cref{H-Xhat} for the given $\widehat{X}^+$ if Lipschitz continuous functions are chosen also for $\widehat{X}$, as in \cref{sec:pscoll-rfde}.

\section{Concluding remarks}
\label{sec:discussion}

In this work, we have seen how the convergence of various methods for the approximation of evolution operators of delay equations can be analyzed in a unified framework, which can in turn be used to provide formal proofs also for other numerical techniques.
For example, besides the methods already mentioned above, an interesting application would be a Fourier-based reduction, also aiming at the approximation of Lyapunov exponents (see, e.g., \cite{BredaVanVleck2014,BredaLiessi2025}).

The authors and colleagues are also working on the approximation of evolution operators obtained from the linearization of RFDEs with state-dependent delays: the proof of convergence will be provided by means of the proposed framework.

Moreover, we are interested in the stability analysis of structured population models formulated as PDEs (see, e.g., \cite{ScarabelBredaDiekmannGyllenbergVermiglio2021,AndoDeReggiLiessiScarabel2023}).
A possible future development could be to draw inspiration from this work and construct a framework for the convergence proof in that case.

\section*{Acknowledgements}

A.\,A., D.\,B.\ and D.\,L.\ are members of INdAM Research group GNCS and of UMI research group ``Mo\-del\-li\-sti\-ca socio-epidemiologica''. The research collaboration was fostered by the workshop \emph{Towards rigorous results in state-dependent delay equations} held at the Lorentz Center in Leiden on 4--8 March 2024.
The work was partially supported by the Italian Ministry of University and Research (MUR) through the PRIN 2022 project (No.\ 20229P2HEA) ``Stochastic numerical modelling for sustainable innovation'', Unit of Udine (CUP G53C24000710006).


\appendix

\section{The case \texorpdfstring{$h<\tau$}{h<τ}}
\label{sec:h-lt-tau}

Let us assume now that $h<\tau$.
As anticipated in \cref{sec:numerical}, in this case for the discretization of $X$ we adopt a piecewise approach.
More precisely, let $Q$ be the minimum positive integer $q$ such that $q h \geq \tau$.
Observe that $Q > 1$.
Let also $\theta^{(q)} \coloneqq - q h$ for $q \in \{0, \dots, Q - 1\}$ and $\theta^{(Q)} \coloneqq - \tau$.
On each interval $[\theta^{(q)}, \theta^{(q - 1)}]$ for $q\in\{1,\dots,Q\}$ we consider a reduction method and we let the operators $R_M$, $P_M$ and $\mathcal{L}_M$ act separately interval-wise.
The generalized polynomials in $\Pi_M$ are then defined piecewise and in each interval $[\theta^{(q)}, \theta^{(q-1)}]$ we consider a basis $\{\phi^{(M,q)}_0,\dots,\phi^{(M,q)}_M\}$.
In principle the methods can be different on each interval: again, compatibility conditions are required (\cref{H-compat-2-b} below).

We now present alternative versions of some of the hypotheses of \cref{sec:hypotheses} tailored to this case.

\begin{H-hyp}[label={},leftmargin=0pt]
\addtocounter{H-hypi}{2}
\item
\begin{H-hyp}[label=(H\arabic{H-hypi}.\arabic*b),leftmargin=*]
\addtocounter{H-hypii}{3}
\item\label{H-V-Xhat-chain-b} Define for each $q\in\{1,\dots,Q\}$ the space
\begin{equation*}
\widehat{X}^{(q)}\coloneqq \{\phi\restriction_{[\theta^{(q)}, \theta^{(q-1)}]} \mid \phi\in \widehat{X}\}
\end{equation*}
and assume that
\begin{itemize}
\item $V(\phi,z)_h\restriction_{[-h,0]}\in \widehat{X}^{(1)}$ for each $(\phi,z)\in X\times\widehat{X}^+$,
\item if $\phi\in X$ and $\phi\restriction_{[\theta^{(q)},\theta^{(q-1)}]} \in \widehat{X}^{(q)}$ for each $q\in\{1,\dots,Q\}$ then $\phi\in\widehat{X}$.
\end{itemize}
\end{H-hyp}
\end{H-hyp}
\begin{H-hyp}[label=(H\arabic*b),leftmargin=*]
\addtocounter{H-hypi}{4}
\item\label{H-compat-2-b} Define for each $q\in\{1,\dots,Q\}$ the spaces
\begin{equation*}
X^{(q)}\coloneqq \{\phi\restriction_{[\theta^{(q)}, \theta^{(q-1)}]} \mid \phi\in X\},
\qquad
\Pi^{(q)}_M \coloneqq \spanop\{\phi^{(M,q)}_0,\dots,\phi^{(M,q)}_M\}.
\end{equation*}
Define also the shift operators $S_1\colon X^+\to X^{(1)}$ and, for each $q\in\{2,\dots,Q\}$, $S_q\colon X^{(q-1)}\to X^{(q)}$: their action is translation of the variable backward by $h$, e.g., $S_q(\phi)(\theta)=\phi(h+\theta)$; in the case of $q=Q$ the functions are also restricted to the interval $[\theta^{(Q)}, \theta^{(Q-1)}]=[-\tau, -(Q-1)h]$ after the shift.
Assume, for each $q\in\{2,\dots,Q\}$, that $S_q(\Pi^{(q-1)}_M)\subseteq\Pi^{(q)}_M$ and, for each $(\phi,z)\in X\times\Pi^+_N$, that $V(\phi,z)_h\restriction_{[\theta^{(1)}, \theta^{(0)}]}\in\Pi_M^{(1)}$.
\end{H-hyp}

Using these hypotheses, the results of \cref{sec:convergence} can be proved also in the case $h<\tau$.
The relevant proofs follow; the corresponding statements are unchanged except for the fact that \cref{H-V-Xhat-chain-b,H-compat-2-b} are used instead of \cref{H-V-Xhat-chain,H-compat-2}.

\begin{proof}[Proof of \cref{hTMN-hTN-same} with $h<\tau$]
Let $\phi\in\widehat{X}$ and let $w^{\ast} \in X^{+}$ be the unique solution of~\cref{collocation_hat}.
Observe that $w^*\in\Pi^+_N$.
Thanks to \cref{H-compat-2-b}, we have
$V(\phi,w^*)_h\restriction_{[\theta^{(1)}, \theta^{(0)}]} \in\Pi_M^{(1)}$.
Moreover, for $q\in\{2,\dots,Q\}$ and $\theta\in[\theta^{(q)}, \theta^{(q-1)}]$
\begin{equation*}
V(\phi,w^*)_h\restriction_{[\theta^{(q)}, \theta^{(q-1)}]}(\theta)
=V(\phi,w^*)(h+\theta)
=\phi(h+\theta)
=S_q(\phi\restriction_{[\theta^{(q-1)}, \theta^{(q-2)}]})(\theta),
\end{equation*}
since $h+\theta\in[\theta^{(q-1)}, \theta^{(q-2)}]\subseteq[-\tau,0]$.
By induction on $q\in\{2,\dots,Q\}$, since at each step $\phi\restriction_{[\theta^{(q-1)}, \theta^{(q-2)}]}\in\Pi_M^{(q-1)}$,
\begin{equation*}
V(\phi,w^*)_h\restriction_{[\theta^{(q)}, \theta^{(q-1)}]}=S_q(\phi\restriction_{[\theta^{(q-1)}, \theta^{(q-2)}]})\in\Pi_M^{(q)},
\end{equation*}
again thanks to \cref{H-compat-2-b}.
Hence $\widehat{T}_{N} \phi = V(\phi,w^*)_h\in\Pi_M$.
The proof then proceeds like in \cref{hTMN-hTN-same}.
\end{proof}

\begin{proof}[Proof of \cref{T-hTN-restriction-to-Xhat} with $h<\tau$]
The proof changes only after \cref{eq:phi-psi}.
From \cref{H-V-Xhat-chain-b} we get $V(\phi,z^*)_h\restriction_{[-h,0]}\in\widehat{X}^{(1)}$, hence, from \cref{eq:phi-psi}, $\phi\restriction_{[\theta^{(1)},\theta^{(0)}]}\in\widehat{X}^{(1)}$.
Observe that, for $\theta\in[-\tau,-h]$,
\begin{equation*}
V(\phi,z^*)_h\restriction_{[-\tau,-h]}(\theta)
=V(\phi,z^*)(h+\theta)
=\phi(h+\theta),
\end{equation*}
since $h+\theta\in[-\tau+h,0]$.
Hence, for each $q\in\{2,\dots,Q\}$, from \cref{eq:phi-psi} we get
\begin{equation*}
\phi\restriction_{[\theta^{(q)},\theta^{(q-1)}]}
=\lambda^{-1}(\psi+V(\phi,z^*)_h)\restriction_{[\theta^{(q)},\theta^{(q-1)}]}
=\lambda^{-1}(\psi\restriction_{[\theta^{(q)},\theta^{(q-1)}]}+\phi(h+\cdot)\restriction_{[\theta^{(q)},\theta^{(q-1)}]}).
\end{equation*}
Since $\phi(h+\cdot)\restriction_{[\theta^{(q)},\theta^{(q-1)}]}$ is $\phi\restriction_{[\theta^{(q-1)},\theta^{(q-2)}]}$ with the variable translated backward by $h$, by induction on $q$ we can conclude that $\phi\restriction_{[\theta^{(q)},\theta^{(q-1)}]}\in\widehat{X}^{(q)}$ and thus, from \cref{H-V-Xhat-chain-b}, that $\phi\in\widehat{X}$.
The proof concludes as in \cref{T-hTN-restriction-to-Xhat}.
\end{proof}

\end{document}